\newcommand{\cK}{\mathcal{K}}
\newcommand{\R}{\mathbb{R}}
\newcommand{\N}{\mathbb{N}}
\DeclareMathOperator*{\Tr}{Tr}
\newcommand*{\eps}{\varepsilon}
\DeclareMathOperator*{\argmin}{argmin}
\renewcommand{\leq}{\leqslant}
\renewcommand{\geq}{\geqslant}
\providecommand{\angs}[2]{\left\langle{#1},{#2}\right\rangle}
\providecommand{\norm}[1]{\lVert{#1}\rVert}
\newcommand*{\prox}{\text{prox}}
\newcommand{\db}[1]{\left\llbracket #1 \right\rrbracket}
\newcommand{\dbsmall}[1]{\llbracket #1 \rrbracket}
\newcommand{\blambda}{\bar{\lambda}}
\newcommand{\ulambda}{\underline{\lambda}}
\newcommand{\bmu}{\bar{\mu}}
\newcommand{\umu}{\underline{\mu}}
\newcommand{\rec}{\mathrm{rec}}
\newcommand{\sparse}{\mathrm{sp}}
\newcommand{\lr}{\mathrm{lr}}
\title[Accelerating Proximal Gradient Descent via Silver Stepsizes]{Accelerating Proximal Gradient Descent via Silver Stepsizes}
\begin{document}

    \maketitle

\begin{abstract}
     	Surprisingly, recent work has shown that gradient descent can be accelerated without using momentum---just by judiciously choosing stepsizes. An open question raised by several papers is whether this phenomenon of stepsize-based acceleration holds more generally for constrained and/or composite convex optimization via projected and/or proximal versions of gradient descent. We answer this in the affirmative by proving that the silver stepsize schedule yields analogously accelerated rates in these settings. These rates are conjectured to be asymptotically optimal among all stepsize schedules, and match the silver convergence rate of vanilla gradient descent~\citep{ap23b, ap23a}, namely $O(\eps^{- \log_{\rho} 2})$ for smooth convex optimization and $O(\kappa^{\log_\rho 2} \log 1/\eps)$ under strong convexity, where $\eps$ is the precision, $\kappa$ is the condition number, and $\rho = 1 + \sqrt{2}$ is the silver ratio. The key technical insight is the combination of recursive gluing---the technique underlying all analyses of gradient descent accelerated with time-varying stepsizes---with a certain Laplacian-structured sum-of-squares certificate for the analysis of proximal point updates. 
\end{abstract}

\begin{keywords}
  Convex optimization, composite optimization, acceleration, proximal gradient descent, multi-step descent, silver stepsizes
\end{keywords}

\section{Introduction}

First, consider the fundamental setting of unconstrained smooth convex optimization $\min_{x} f(x)$, where without loss of generality $f$ is $1$-smooth (i.e., $\nabla f$ is $1$-Lipschitz). Until recently, folklore wisdom argued that the gradient descent algorithm (GD)
\begin{align*}
	x_{t+1} = x_t - \alpha_t \nabla f(x_t)
\end{align*}
requires $O(\eps^{-1})$ iterations to converge to an $\eps$-optimal solution, and moreover that any asymptotically faster convergence rate (i.e., ``accelerated'' rate) requires changing GD by adding momentum, internal dynamics, or other additional building blocks beyond just modifying the stepsizes.
See, for example, the textbooks~\citet{bubeckbook, beckbook, nesterovbook, accelsurvey} for a further discussion of mainstream approaches for accelerating GD.

\par Surprisingly, a recent line of work has shown that GD can be accelerated simply by using a judicious choice of stepsizes~\citep{altschulermsthesis,daccachemsthesis,eloimsthesis, gsw23, bnbpep, ap23b, ap23a, gdlongsteps, gswcomposing, gsw24, rppa, zhangcomposing, anytimeaccel}. In particular,~\citet{ap23b} showed that GD can converge at the \emph{silver convergence rate} $O(\eps^{-\log_{\rho} 2}) \approx O(\eps^{-0.7864})$---conjectured to be asymptotically optimal among all possible stepsize schedules---by using the silver stepsize schedule
\begin{align*}
	\alpha_t = \rho^{\nu(t+1)-1} + 1\,,
\end{align*}
where $\rho := 1 + \sqrt{2}$ denotes the silver ratio, and $\nu(i)$ denotes the $2$-adic valuation of $i$, i.e., the largest integer $j$ such that $2^j$ divides $i$. The silver stepsize schedule deviates qualitatively from mainstream stepsize schedules: it is time-varying, nonmonotone, fractal-like, and uses arbitrarily large stepsizes that are in particular larger than $2$ (the threshold at which constant stepsize schedules make GD divergent). See Figure~\ref{fig:sss} for an illustration. These results extend in a conceptually identical way if $f$ is additionally strongly convex: the silver stepsize schedule similarly accelerates the convergence of GD from the classical rate $O(\kappa \log \tfrac{1}{\eps})$ for constant stepsize schedules, to $O(\kappa^{\log_{\rho} 2} \log \frac{1}{\eps})$, where $\kappa$ denotes the condition number~\citep{ap23a}. 

\begin{figure}
    \begin{minipage}[c]{0.5\textwidth}
        \includegraphics[width=\textwidth]{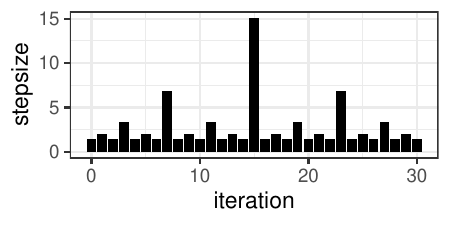}
    \end{minipage}
    \begin{minipage}[l]{0.5\textwidth}
        \caption{\footnotesize First $31$ stepsizes of the silver stepsize schedule. The stepsizes are time-varying and fractal-like.} \label{fig:sss}
    \end{minipage}
\end{figure}

\par In just the past year since, an exciting flurry of work has further investigated this phenomenon of stepsize-based acceleration:~\citet{gswcomposing, gsw24, rppa, zhangcomposing} improved the hidden constant in the aforementioned rate $O(\eps^{-\log_{\rho} 2})$ and provided extensions to different performance metrics, and~\citet{anytimeaccel} extended final-iterate convergence to anytime convergence, answering a COLT open question of~\citet{openproblem}. See the related work in \S\ref{subsec:priorwork} for a detailed discussion.

\paragraph*{Generality of stepsize-based acceleration?} However, these results focus primarily on the setting of unconstrained, smooth, (strongly) convex optimization. How general is this phenomenon? In particular,~\citet{gdlongsteps,ap23a} asked whether stepsize-based acceleration holds in the following two fundamental settings:

\begin{itemize}
	\item \textbf{Constrained optimization via projected GD?} For constrained optimization $\min_{x \in \cK} f(x)$, the standard analog of GD is \emph{projected GD} $x_{t+1} = \Pi_{\mathcal{K}}(x_t - \alpha_t \nabla f(x_t))$,
	where $\Pi_{\cK}$ denotes the projection operator onto the constraint set $\cK$, which is typically assumed to be convex and closed. Can projected GD be similarly accelerated by judiciously choosing stepsizes?
	\item \textbf{Composite optimization via proximal GD?} For composite convex optimization $\min_{x} F(x) := f(x) + h(x)$,
	where $f$ is convex and smooth, and $h$ is convex (but potentially non-smooth) and accessible through its proximal operator $\prox_{\alpha h}(x) = \argmin_{z} h(z) + \tfrac{1}{2\alpha}\|z-x\|^2$, the standard analog of GD is \emph{proximal GD} $x_{t+1} = \prox_{\alpha_t h}(x_t - \alpha_t \nabla 	f(x_t))$, also known as composite GD or backward-forward splitting.
	The point of proximal GD is that it enables better convergence than directly applying (sub)gradient descent on $F$. Indeed, since $F$ is non-smooth, GD on $F$ might not converge and even when it does, the convergence rate is slow; for example, under standard additional assumptions such as bounded gradients, it converges at the rate $O(\eps^{-2})$, whereas proximal GD enjoys a faster convergence rate $O(\eps^{-1})$ using constant stepsizes (see e.g., \citet{kelley, beckbook,  subgdrate}). Can this be accelerated further to the silver convergence rate using time-varying stepsizes?

\end{itemize}

Both of these settings are more general than the setting of previous results for stepsize-based acceleration. Indeed, projected GD recovers vanilla GD by taking $\cK = \R^d$, and proximal GD recovers vanilla GD by taking $h \equiv 0$. (Note also that proximal GD recovers projected GD by taking $h$ to be the indicator function of the constraint set.) Thus, we cannot hope to accelerate projected GD or proximal GD beyond what is possible for vanilla GD---but can we at least match it? Several recent works~\citep{latafat2024adaptive,ranjan2024verification,sambharya2024learning} have empirically observed that the silver stepsizes (or other time-varying schedules with large steps) can be effective for both settings;
however, theoretical guarantees are unknown beyond vanilla GD.

\paragraph*{Challenges.} At first glance, these extensions might seem straightforward since classical analyses for vanilla, projected, and proximal GD parallel each other closely and lead to nearly identical $O(\eps^{-1})$ convergence rates for constant stepsize schedules. For example, at least intuitively, the classical analysis of projected GD reduces to the analysis of vanilla GD by arguing that since the projection operator is $1$-Lipschitz, it cannot worsen the progress towards the minimizer; see e.g.,~\citet[\S3.1.2]{bansalgupta}. However, for stepsize-based acceleration it is unclear how to apply any such arguments. This is because existing analyses for stepsize-based acceleration use more complex dependencies between the iterates, in contrast to the modular argument of combining ``GD makes progress'' and ``interleaving projections does not worsen progress'' at each step; 
see the techniques section \S\ref{subsec:techniques} for a more detailed discussion.

\par This is not just a failure of current analyses. Interleaving projection and/or proximal operators can change the behavior of GD in fundamental ways when using time-varying stepsizes. 
For example,~\citet[\S4]{gdlongsteps} numerically investigated stepsize-based acceleration for $n=2,3$ steps and observed that projected GD has strictly worse rates than vanilla GD, and moreover proximal GD has strictly worse rates than projected GD. This suggests that as the level of generality increases, the algorithms may have to take strictly less aggressive stepsizes. As another example, even for the seemingly simple setting of quadratic objectives $f$---a setting for which stepsize-based acceleration has been known for $70+$ years~\citep{young}---convergence results do not extend in a straightforward way from vanilla GD to projected GD. 
This is due to fundamental issues beyond just a failure of existing analyses: for example, in the case $n=2$, it can be shown that the optimal stepsizes for unconstrained quadratic optimization (based on Chebyshev polynomials) provide a convergence rate greater than $1$ (i.e., not even contractive) if a projection or proximal operation is interleaved.

\begin{table}[]
	\scriptsize
	\renewcommand{\arraystretch}{1.2} 
	\begin{tabular}{|c|ccc|}
		\hline
		& \textbf{Convex opt}                      & \textbf{Constrained convex opt}                             & \textbf{Composite convex opt}                                                   \\ \hline
		Problem                     & $\min_x f(x)$                            & $\min_{x \in \mathcal{K}} f(x)$                                       & $\min_x f(x) + h(x)$                                                    \\
		Algorithm                   & GD                                       & Projected GD                                                & Proximal GD                                                                     \\
		Update rule                 & $x_{t+1} = x_t - \alpha_t \nabla f(x_t)$ & $x_{t+1} = \Pi_{\cK}(x_t - \alpha_t \nabla f(x_t))$ & $x_{t+1} = \mathrm{prox}_{\alpha_t h} \big( x_t - \alpha_t \nabla f(x_t))$ \\
		Rate (constant stepsizes) & $\eps^{-1}$ (folklore)    & $ \eps^{-1} $ (folklore)    & $\eps^{-1}$ (folklore)         \\
		Rate (silver stepsizes) & $\eps^{-\log_{\rho} 2}$  & \color{blue}{$\eps^{-\log_{\rho} 2}$ (Theorem~\ref{thm:main})}  & \color{blue}{$\eps^{-\log_{\rho} 2}$ (Theorem~\ref{thm:main})} \\
		\hline
	\end{tabular}
	\caption{\footnotesize 
		The mainstream convergence rate for GD and its variants is $O(\eps^{-1})$, achieved by constant stepsize schedules.~\citet{ap23b} recently showed that GD can be accelerated to $O(\eps^{-\log_\rho 2}) \approx O(\eps^{-0.7864})$ for unconstrained smooth convex optimization  (left) by using the silver stepsize schedule. 
		It remained open if this phenomenon of stepsize-based acceleration holds in the more general settings of constrained (middle) and composite (right) convex optimization. We show that the answer is yes.
		In this table, $\eps$ is the precision, $\rho = 1+\sqrt{2}$ is the silver ratio, $f$ and $h$ are convex, and $f$ is $1$-smooth (wlog by rescaling). If $f$ is also strongly convex, then the rates improve similarly from $O(\kappa \log 1/\eps)$ for constant stepsizes, to $O(\kappa^{\log_{\rho}2} \log 1/\eps)$ for the silver stepsize schedule, where $\kappa$ is the condition number; see Corollary~\ref{cor:sc}.
	}
	\label{tab:summary}
\end{table}

Hence, it remained unclear whether stepsize-based acceleration extends to more general settings of constrained and/or composite convex optimization via projected and/or proximal versions of GD.

\subsection{Contribution}

In this paper, we answer these questions in the affirmative. In particular, we show that by using the silver stepsize schedule, projected and proximal GD converge at the silver convergence rate. This was previously only known for vanilla GD. See Table~\ref{tab:summary} for a summary.

We state our main result for proximal GD (which generalizes projected GD) and for general $M$-smoothness (equivalent after normalizing the silver stepsizes by $1/M$). For context, note that the tight rate for proximal GD using constant stepsizes $\alpha_t \equiv 1$ is $F(x_n) - F(x_*) \leq \frac{1}{4n}  M\norm{x_0 - x_*}^2$ where $x_*$ denotes any minimizer of $F$ \citep[Theorem 7]{tv23}.

\begin{theorem}[Main result]\label{thm:main} 
	For any horizon $n = 2^{k} - 1 (k \in \N)$, any $M$-smooth convex function $f$, any convex function $h$, and any initialization $x_0$, 
	\[
		F(x_n) - F(x_*) \leq \frac{\rho}{4\sqrt{2} n^{\log_2 \rho}}  M\norm{x_0 - x_*}^2 \,,
	\]
	where $x_*$ denotes any minimizer of $F = f + h$, and $x_n$ denotes the $n$-th iterate of proximal GD using the silver stepsize schedule. In particular, in order to achieve error $F(x_n) - F(x_*) \leq \eps$, it suffices to run proximal GD for some number of iterations $n$, where 
	\begin{align*}
		n \lesssim \left( M\|x_0 - x_*\|^2 / \eps \right)^{\log_{\rho} 2} \approx \left( M\|x_0 - x_*\|^2 / \eps \right)^{0.7864}\,.
	\end{align*}
\end{theorem}

We make four remarks. First, by standard black-box reductions (see, e.g.,~\citet[Footnote 2]{ap23b}), this acceleration immediately extends to the strongly convex setting. This improves the tight iteration complexity $O(\kappa \log 1/\eps)$ for proximal GD when using constant stepsizes \citep{proxgdsc}, to match the silver convergence rate $O(\kappa^{\log_{\rho} 2} \log 1/\eps)$ that was previously only known for vanilla GD.

\begin{corollary}\label{cor:sc}
	Assume further that $f$ is $m$-strongly convex. There exists a stepsize schedule such that $\norm{x_n - x_*} \leq \eps$ after
	\begin{align*}
		n \lesssim \kappa^{\log_\rho 2}  \log \left(  \|x_0 - x_*\| / \eps \right) \approx \kappa^{0.7864}  \log \left( \|x_0 - x_*\| / \eps \right) 
	\end{align*}
	iterations, where $\kappa := M/m$ denotes the condition number.
\end{corollary}

Second, these acceleration results readily imply the silver convergence rate for all special cases of proximal GD, e.g.: vanilla GD ($h \equiv 0$),  projected GD ($h = \iota_{\mathcal{K}})$, and  ISTA~\citep{ista} ($h = \norm{\cdot}_1$). For the latter two cases, no results on stepsize-based acceleration were known.

Third, for simplicity, these results are stated for horizons $n = 2^{k} - 1$. However, the asymptotic rates extend to arbitrary $n$ by standard tricks, e.g., output the best iterate rather than the final iterate, or run for the largest $n' = 2^{k'} - 1$ less than $n$. Finally, we remark about the tightness of our bounds.

\begin{remark}[Optimality of the bounds] 
	In our rates $O(\eps^{-\log_{\rho} 2})$ and $O(\kappa^{\log_{\rho} 2} \log 1/\eps)$, the exponent $\log_{\rho} 2$ is tight for the silver stepsize schedule (proven in Appendix \ref{appendix:lowerbound}), and is conjecturally optimal among all possible stepsize schedules (since that is conjectured even for the special case of GD~\citep{ap23b,  ap23a, gswcomposing, gsw24}). 
    
    These rates for stepsize-based acceleration are weaker than traditional momentum-based acceleration, which changes (vanilla, projected, proximal) GD beyond modifying stepsizes and achieves rates $O(\eps^{-1/2})$ and $O(\kappa^{1/2} \log 1/\eps)$ for the convex and strongly convex settings, respectively, both of which are information-theoretically optimal among first-order methods~\citep{accelsurvey}. 

\end{remark}

\subsection{Overview of techniques}\label{subsec:techniques}

The overarching rationale for stepsize-based acceleration is that although short and long steps are individually suboptimal---since short steps undershoot on flat objectives, and long steps overshoot on steep objectives---judiciously combining them can lead to overall faster convergence because these worst-cases cannot align. See~\citet{altschulermsthesis} for an interpretation in terms of hedging.
Notably, proving stepsize-based acceleration precludes using classical analyses (such as those for constant stepsize schedules, see e.g.,~\citet{bansalgupta}) which bound the progress of each iteration separately and then sums these bounds. Indeed, any such analysis is unable to prove any benefit of deviating from constant stepsize schedules.  

\par Instead, in order to prove stepsize-based acceleration, it is necessary to argue holistically about the cumulative progress---i.e., the \emph{multi-step descent}---of the algorithm. 
Let us briefly describe how prior work accomplishes this for GD, before moving to the present setting of proximal GD.

\paragraph*{Stepsize acceleration for GD.}  
Suppose one seeks to prove that $n$ steps of GD converges at a rate $\tau_n$, i.e., an inequality 
$f(x_n) - f(x_*) \leq \tau_n \|x_0 - x_*\|^2$.
All existing analyses of stepsize-based acceleration establish this by proving an identity of the form
\begin{align}
	  f(x_*) - f(x_n)  + \tau_n \norm{x_0 - x_*}^2 = \sum_{i, j} \lambda_{ij}Q_{ij} + (\mathrm{SOS})\,,
	\label{eq:intro-descent}
\end{align}
where $(\mathrm{SOS})$ is a sum-of-squares polynomial, $\lambda_{ij} \geq 0$, and $Q_{ij} \geq 0$ are co-coercivities (i.e., certain polynomials in the iterates $x_i$ and $x_j$, gradients $\nabla f(x_i)$ and $\nabla f(x_j)$, and function values $f(x_i)$ and $f(x_j)$; see Definition~\ref{def:coco}). The identity~\eqref{eq:intro-descent} constitutes a \emph{certificate} of the desired multi-step descent since the right hand side is obviously nonnegative. 

\par In words, the co-coercivity inequality $Q_{ij} \geq 0$ is a long-range consistency condition regarding information about $f$ at different iterates along the GD trajectory. 
Crucially, these inequalities are valid not only for consecutive iterates but for \emph{all} $x_i$ and $x_j$, enabling a holistic argument that captures how iterations affect each other. In the right hand side of \eqref{eq:intro-descent}, $\lambda_{ij} \geq 0$ should be interpreted as multipliers that combine the inequalities $Q_{ij} \geq 0$, and the SOS term should be interpreted as a way to add any other possible valid inequalities. Indeed, although we will not use the following fact in this paper, identities of the form~\eqref{eq:intro-descent} provide a complete proof system in the sense that any valid convergence rate $\tau_n$ admits a proof via such an identity~\citep{thg17}. 

\par There are two challenges for proving an identity of the form~\eqref{eq:intro-descent}: 1) providing explicit expressions for the multipliers $\{\lambda_{ij}\}$ and the SOS term, and 2) verifying the identity, which amounts to checking that the coefficients match on both sides of this polynomial identity. In particular, the issue is that the complexity of both tasks ostensibly increases in the number of iterations~$n$.

\par The key technique that makes both steps tractable is \emph{recursive gluing}. This technique was introduced in~\citet{ap23b, ap23a} and is at the heart of all analyses in subsequent works on accelerating GD~\citep{gswcomposing, gsw24, rppa, zhangcomposing}. The idea is that for increasing $n$, the complexity of proving identities such as~\eqref{eq:intro-descent} can be controlled by using the recursive structures of $\{\lambda_{ij}\}$ and $(\mathrm{SOS})$ that are inherited from the recursive pattern of the stepsizes. This allows analyzing the performance of a longer schedule created by concatenating two shorter schedules---possibly with additional modifications such as changing~\citep{ap23a} or adding~\citep{ap23b,gswcomposing,gsw24, rppa, zhangcomposing} stepsizes---by essentially composing the progress of the shorter schedules. More precisely, this provides a new identity certifying multi-step descent for the longer schedule by essentially combining the identities for the shorter schedules. The multipliers $\{\lambda_{ij}\}$ for the new identity are comprised of three components in a formulaic manner: a recursive component (that combines the identities for the shorter subsequences), a sparse correction (that affects only $O(1)$ entries of $\{\lambda_{ij}\}$), and a low-rank correction (that affects only $O(1)$ rows of $\{\lambda_{ij}\}$). The upshot is that analysis for the longer sequence is inherited from analyses for the shorter sequences.
Concretely, this reduces proving an $n$-step identity~\eqref{eq:intro-descent} to verifying only $O(1)$ coefficients.

\paragraph*{Stepsize acceleration for proximal GD.} We analyze proximal GD by proving a multi-step descent identity analogous to~\eqref{eq:intro-descent}, but now with a composite objective $F = f+h$, with co-cocoercivity inequalities $Q_{ij}^{f} \geq 0$ and $Q_{ij}^{h} \geq 0$ which respectively encode consistency conditions for $f$ and $h$, and with two sets of corresponding multipliers $\{\lambda_{ij}\}$ and $\{\mu_{ij}\}$. The identity is now of the form
\begin{align}
    F(x_*) - F(x_n) 
    + \tau_n \norm{x_0 - x_*}^2 = \sum_{i, j} \lambda_{ij}Q_{ij}^{f} + \sum_{i,j} \mu_{ij} Q_{ij}^{h} + (\mathrm{SOS})\,.
	\label{eq:intro-descent-proximal}
\end{align}
The immediate challenge in our setting is to find the multipliers and the SOS term. Our starting point is the observation that we can reuse the same multipliers $\{\lambda_{ij}\}$ from the analysis of GD. While this approach is seemingly straightforward, the corresponding analysis is rather subtle due to the dependency between the co-coercivities. Based on this, we build the multipliers $\{\mu_{ij}\}$ (which are new and different from $\{\lambda_{ij}\}$) that can also be characterized using the technique of recursive gluing. Combined with $\{\lambda_{ij}\}$, this reduces the complexity of proving the $n$-step identity into verifying $O(1)$ coefficients as described above.

\par Conceptually, the SOS term poses a core new difficulty that arises for proximal GD but not GD. Given $\{\lambda_{ij}\}$ and $\{\mu_{ij}\}$, this term is the residual in the identity~\eqref{eq:intro-descent-proximal}; the challenge is proving that the resulting quadratic form is actually a sum of squares.  For GD, this is easy as the term is just a single square~\citep{ap23b}.\footnote{In fact, in the strongly convex case, the SOS term can be made to be zero for GD~\citep{ap23a}.} In contrast, for proximal GD this term is of \emph{high rank}, implying that any decomposition as a sum of squares (if one exists) requires $\Omega(n)$ terms. Certifying that this term is SOS by finding such a decomposition therefore appears to bring us back to the original challenge of proving multi-step descent identities: the complexity increases in $n$. 

\par We approach this by combining certain aspects of the analysis of both components of proximal GD: vanilla GD and proximal point method. This is most easily stated in terms of the coefficient matrix corresponding to this quadratic form; establishing that the term is SOS is equivalent to showing that this matrix is positive semidefinite. In the case of GD, this matrix has rank $1$; in the case of the proximal point method \citep{taylorcomposite}, while the matrix has rank $\Omega(n)$, it has an elegant Laplacian structure which implies positive semidefiniteness. It turns out that modulo a Schur complement, the matrix in our analysis of proximal GD is precisely a sum of a rank-1 matrix and a Laplacian matrix that can be constructed recursively. This provides a simple certificate of positive semidefiniteness and thus provides the final missing piece---verifying that the term is indeed SOS---for the recursive gluing argument to prove the desired identity~\eqref{eq:intro-descent-proximal}. We remark that such combination of structures is novel to analyses of stepsize acceleration, 
and we are hopeful that such ideas might lead to new possibilities in this line of work.

\subsection{Related work}\label{subsec:priorwork}

\paragraph*{Stepsize-based acceleration.}
For the special case of quadratic objectives, Young showed in 1953 that GD can be accelerated using non-constant stepsizes, given explicitly as the inverses of the roots of Chebyshev polynomials~\citep{young}. However, until recently it was believed that this phenomenon of stepsize-based acceleration was limited to quadratic objectives. Indeed, despite the investigation of many stepsize schedules---including even adaptive strategies such as exact line search~\citep{boyd,nocedal,de2017worst}, Armijo-Goldstein schedules~\citep{armijo}, Polyak-type schedules~\citep{polyakbook}, and Barzilai-Borwein-type schedules~\citep{barzilai1988two}---there had been no analysis showing that any stepsize schedule improves over constant stepsizes in any setting beyond quadratics. 

Beginning with~\citet{altschulermsthesis}, a line of work showed that such results are in fact possible. Most relevant to the setting of this paper,~\citet[Chapter 8]{altschulermsthesis} provides optimal stepsizes for $n=2,3$ iterations for smooth, strongly convex optimization. Cycling through these schedules provides a constant factor improvement over the unaccelerated rate for constant stepsizes. These results were proved via multi-step descent identities of the form~\eqref{eq:intro-descent} by leveraging the performance estimation problem (PEP) framework of~\citet{dt14, thg17}, which uses semidefinite programming to numerically compute the convergence rate of a \emph{given} stepsize schedule. A key difficulty for PEP is that the \emph{search} for optimal stepsizes is nonconvex and computationally difficult as $n$ increases.~\citet{daccachemsthesis,eloimsthesis} similarly computed optimal stepsizes for $n=2,3$ iterations for the non-strongly convex setting and different performance metrics.~\citet{bnbpep} developed a branch-and-bound framework for PEP in order to numerically perform this nonconvex search for stepsizes, and computed schedules up to $n=50$ for the non-strongly convex setting.~\citet{gdlongsteps} obtained a constant-factor improvement over constant stepsizes for the non-strongly convex setting by cycling through approximate schedules of length $n=127$.

\par In September 2023, two concurrent lines of work showed asymptotic acceleration for GD.~\citet{gsw23} proposed a nonperiodic stepsize schedule similar to but different than the silver schedule, and showed that it accelerates GD at the rate $O(\eps^{-0.9467})$ for smooth convex optimization, and $O(\kappa^{0.9467} \log 1/\eps)$ for the strongly convex setting. Concurrently,~\citet{ap23b, ap23a} proved that GD can accelerate at the silver rate $O(\eps^{-\log_{\rho} 2}) \approx O(\eps^{-0.7864})$, by introducing the silver stepsizes and developing the recursive gluing technique to analyze it. The results are analogous for the strongly convex setting, yielding the rate $O(\kappa^{\log_{\rho} 2} \log \tfrac{1}{\eps}) \approx O(\kappa^{0.7864} \log \tfrac{1}{\eps})$. These rates are conjecturally asymptotically optimal among all possible stepsize schedules. 

\par In the year since, a flurry of work has built upon these results.
One thrust refined the hidden constant in the asymptotic rate $O(\eps^{-\log_{\rho} 2})$ of~\citet{ap23b}, by a factor of $2$ for the silver stepsizes~\citep{rppa}, by a factor of ${\sim}2.32$ using a similar schedule~\citep{gsw24}, and most recently by a conjecturally optimal factor of ${\sim}2.37$ using highly optimized schedules~\citep{gswcomposing,zhangcomposing}. A second thrust of these papers
extended convergence results from function suboptimality to performance metrics related to the gradient norm. For these results,~\citet{gswcomposing, zhangcomposing} identified certain families of stepsize schedules that allow structured formulae for concatenation (and adding a stepsize in the middle), in this way refining and simplifying the recursive gluing technique. Leveraging this,~\citet{anytimeaccel} answered a COLT open question of~\citet{openproblem} by concatenating silver stepsize schedules of varying lengths, in order to establish ``anytime convergence'' results which ensure accelerated convergence (at rate $O(\eps^{-0.8932})$) for GD at all iterations rather than just at the final iteration.
It seems plausible that similar results extend to proximal GD. 

All these papers\footnote{
    The one exception is~\citet{rppa}, which shows that the relaxed proximal point algorithm (RPPA) achieves the silver convergence rate for unconstrained convex (but possibly nonsmooth) $f$. However, this does not imply our results as it requires access to a stronger oracle: the proximal operator for the entire objective. Indeed, as detailed in~\citet[\S1.1]{rppa}, RPPA on $f$ is equivalent to GD on the (smooth and convex) Moreau envelope of $f$, which means that its setting is in a formal sense equivalent to unconstrained smooth convex optimization---the setting of previous stepsize-based acceleration results for GD. See the discussion around~\citet[Theorem 5.3]{rppa}. 
} focus on unconstrained minimization of smooth and (strongly) convex objectives; the purpose of the present paper is to go beyond this setting.

\paragraph*{First-order methods for composite convex optimization.}

Minimizing composite objectives is a fundamental problem with many applications; see, e.g., the survey~\citet{proxsurvey}. A prominent example is when $h = \norm{\cdot}_1$, where proximal GD is known as ISTA \citep{ista}. The seminal work~\citet{fista} proposed the FISTA algorithm and showed an $O(\eps^{-1/2})$ accelerated rate. Since then, many algorithms have been proposed; see, e.g.,~\citet[Chapter 10]{beckbook}. 
A recent line of work has used PEP not only to obtain tight analyses of existing algorithms \citep{taylorcomposite,proxgdsc,accelsurvey}, but also to develop an optimal algorithm with rate $O(\eps^{-1/2})$ that has the best possible constant factor and an exactly matching lower bound~\citep{optista}. 
As is the case for GD, such acceleration results in the composite setting are obtained by changing proximal GD beyond just modifying stepsizes, e.g., via momentum.

\paragraph*{Time-varying and/or large stepsizes in other contexts.} Time-varying and/or large stepsizes have also been investigated in other contexts, such as accelerating GD when the objective has Hessians with bimodal~\citep{oymak2021provable} or multiscale~\citep{kelner2022big} spectral structure, when the objective is separable~\citep{altschulermsthesis, ap24random}, when the objective has fourth order growth~\citep{davis2024gradient}, as well as for logistic regression~\citep{axiotis2023gradient,wu2024large}, parametric convex optimization~\citep{ranjan2024verification,sambharya2024learning}, min-max optimization~\citep{shugart2025negative}, and training neural networks~\citep{smith2017cyclical,huang2017snapshot}. Large steps for (stochastic) GD can lead to phenomena such as the Edge of Stability~\citep{wu2018eos, cohen2021gradient}, implicitly biased convergence towards flat minima~\citep{keskar2017large},
stochastic-like behavior on multiscale functions~\citep{kong2020stochasticity}, and chaos or catapults along the trajectory~\citep{van2012chaotic, lewkowycz2020large, wang2023good, chen2024stability}. 

\par Interestingly, recent empirical results document that the silver stepsize schedule performs well in constrained settings~\citep{ranjan2024verification}; also, across several benchmarks, hyperparameter tuning algorithms empirically learn time-varying stepsizes schedules for projected and proximal GD that qualitatively resemble the silver stepsize schedule in certain aspects such as the use of large but rare stepsizes, and fractal-like or near-periodic structure~\citep{latafat2024adaptive,sambharya2024learning}. This paper provides theoretical justification for these empirical phenomena.

\section{Preliminaries and notations}

\paragraph*{Notation.} We denote the sum of a finite sequence $a = \{a_i\}$ by $\sum a$. For vectors $a, b$ of the same length, $a \geq b$ denotes $a_i \geq b_i$ for all $i$. Vectors are assumed to be vertical. We use $e_1, e_2, \dots$ to denote the standard basis vectors; the ambient dimension will always be clear from context. For simplicity, throughout $M=1$; this is without loss of generality after rescaling the functions $f$ and $h$ by $1/M$. Henceforth, with a slight abuse of notation, $\{\alpha_i\}$ denotes the (normalized) silver stepsizes.

\subsection{Co-coercivities}

For shorthand, we denote $f_i := f(x_i)$, $g_i := \nabla f(x_i)$, $h_i := h(x_i)$, $F_i := F(x_i)$ and $s_i$ to be a subgradient of $h$ at $x_i$, for all $i \in \{0, 1, \dots, n, *\}$. In this notation, proximal GD has the update
$x_{t+1} = x_t - \alpha_t(g_t + s_{t+1})$,
and the optimality condition is $g_* + s_* = 0$. As described in \S\ref{subsec:techniques}, our analysis uses certain inequalities---\emph{co-coercivities}---between pairs of iterates.

\begin{definition}[Co-coercivities]\label{def:coco} Define $ Q_{ij}^f := f_i - f_j - \angs{g_j}{x_i - x_j} - \frac{1}{2}\norm{g_i - g_j}^2$ and $Q_{ij}^h := h_i - h_j - \angs{s_j}{x_i - x_j}$.
\end{definition}
\begin{lemma}[{\citet[Theorem 4]{thg17}}]\label{lem:coco} Let $f$ be convex and 1-smooth, and let $h$ be convex. Then $Q_{ij}^f \geq 0$ and $Q_{ij}^h \geq 0$ for all $i, j$.   
\end{lemma}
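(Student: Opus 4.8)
The plan is to prove the two families of inequalities separately. The bound $Q^h_{ij}\geq 0$ will be immediate from convexity, whereas $Q^f_{ij}\geq 0$ requires a short argument combining convexity with $1$-smoothness; this latter inequality is precisely \citep[Theorem~4]{thg17}, so one acceptable route is to cite it directly, but below I outline a self-contained argument.

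For $Q^h_{ij}$ I would simply invoke the defining property of a subgradient: since $h$ is convex and $s_j$ is a subgradient of $h$ at $x_j$, we have $h_i \geq h_j + \angs{s_j}{x_i - x_j}$, which rearranges to $Q^h_{ij} = h_i - h_j - \angs{s_j}{x_i-x_j}\geq 0$. For $Q^f_{ij}$ the idea is to reduce to the case where $x_j$ is a global minimizer by translating away the gradient at $x_j$. Fix $i, j$ and consider the auxiliary function $\phi(x) := f(x) - \angs{g_j}{x}$. It is convex (a convex function plus a linear one) and $1$-smooth (its gradient $\nabla\phi = \nabla f - g_j$ is $1$-Lipschitz), and satisfies $\nabla\phi(x_j) = 0$, so $x_j$ minimizes $\phi$. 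I would then feed the gradient-step point $x_i - \nabla\phi(x_i)$ into the descent lemma for $\phi$ at $x_i$:
\[
\phi(x_j) \;\leq\; \phi\bigl(x_i - \nabla\phi(x_i)\bigr) \;\leq\; \phi(x_i) - \norm{\nabla\phi(x_i)}^2 + \tfrac12\norm{\nabla\phi(x_i)}^2 \;=\; \phi(x_i) - \tfrac12\norm{\nabla\phi(x_i)}^2\,.
\]
Substituting $\phi(x_i) - \phi(x_j) = f_i - f_j - \angs{g_j}{x_i-x_j}$ and $\nabla\phi(x_i) = g_i - g_j$ and rearranging yields $f_i - f_j - \angs{g_j}{x_i-x_j} \geq \tfrac12\norm{g_i - g_j}^2$, which is exactly $Q^f_{ij}\geq 0$.

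Neither argument uses anything particular about the indices, so both go through verbatim for all $i,j$, including the cases where $i$ or $j$ equals $*$ (using $g_*$ and $s_*$). I do not anticipate a genuine obstacle here: the only step that is not pure bookkeeping is the standard trick of evaluating the descent lemma at $x_i - \nabla\phi(x_i)$ to convert $1$-smoothness into the co-coercivity-type bound, and even that could be bypassed by citing \citep{thg17} directly.
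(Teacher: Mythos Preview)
Your argument is correct. The paper does not give its own proof of this lemma; it simply cites \citep[Theorem~4]{thg17} and moves on. Your self-contained derivation---the subgradient inequality for $Q^h_{ij}$ and the standard ``shift by a linear function so that $x_j$ becomes a minimizer, then apply the descent lemma at a gradient step'' argument for $Q^f_{ij}$---is exactly the classical proof of co-coercivity and goes through without issue, including at $i$ or $j$ equal to $*$.
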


\subsection{Helper lemmas}\label{ssec:helper}

For each $k \in \N$ and $n = 2^k-1$, we denote by $\pi^{(k)} := [\alpha_0, \dots, \alpha_{n-1}]$ the first $n$ entries of the silver stepsize schedule. We make use of its following basic properties~\citep{ap23b}.

\begin{lemma}[Properties of $\pi^{(k)}$]\label{lem:silver} For all $k \in \N$, $\sum \pi^{(k)} = \rho^k-1$ and  $\pi^{(k+1)} = [\pi^{(k)}, \rho^{k-1} + 1, \pi^{(k)}]$.
\end{lemma}

Our analysis uses an auxiliary sequence 
$c^{(k)}$ which is related to but different from $\pi^{(k)}$.

\begin{definition}\label{def:ck} For all $k \in \N$ and $n = 2^k-1$, the sequence $c^{(k)} \in \R^n$ is recursively defined as $c^{(1)} := [2(\rho-1)], c^{(k+1)} := [\pi^{(k)}, (1+1/\rho^k)(\rho^{k-1}+1), \rho c^{(k)} - (\rho-1-1/\rho^k)\pi^{(k)}]$.
\end{definition}

\begin{lemma}[Properties of $c^{(k)}$]\label{lem:ck} For all $k \in \N$, $\sum c^{(k)} = 2(\rho^k-1)$ and $c^{(k)} \geq \pi^{(k)}$.
\end{lemma}
\begin{proof} For the equality, by induction, $\sum c^{(k+1)} = (\rho^k-1) + (1+1/\rho^k)(\rho^{k-1}+1) + 2\rho(\rho^k-1) - (\rho-1-1/\rho^k)(\rho^k-1) = 2(\rho^{k+1} - 1)$ from $\sum \pi^{(k)} = \rho^k - 1$ (Lemma \ref{lem:silver}). For the inequality, this is straightforward by induction and the recursive construction of $\pi^{(k)}$ (Lemma \ref{lem:silver}).
\end{proof}

\section{Proof of the main result}\label{sec:proof}

In this section, we prove the main result (Theorem \ref{thm:main}) by constructing nonnegative multipliers and sum-of-squares terms such that the following multi-step descent identity holds. These terms capture how different iterations affect other iteration's progress, and in particular capture effects beyond consecutive iterations. See \S\ref{subsec:techniques} for a high-level overview of our analysis techniques. 

\begin{theorem}[Certificate of multi-step descent]\label{thm:maincert} 
Let $k \in \N$ and  $n = 2^k - 1$. There exist $\lambda = \lambda^{(k)}$ (Definition~\ref{def:lambda}), $\mu = \mu^{(k)}$ (Definition~\ref{def:mu}), and $S = S^{(k)}$ (Definition~\ref{def:laplacian}) such that:
\begin{itemize}
    \item [(i)] \underline{Nonnegativity of multipliers.} $\lambda_{i, j}, \mu_{i, j} \geq 0$ for all $i \neq j$.
    \item [(ii)] \underline{Positive semidefiniteness of slack.} $S \succeq 0$. 
    \item [(iii)] \underline{Multi-step descent identity.}
\end{itemize}
\begin{equation}\label{eq:maincert}
    \sum_{i, j} \lambda_{i,j} Q_{ij}^f + \sum_{i, j} \mu_{i,j} Q_{ij}^h = (2\rho^k - 1) (F_* - F_n) + \frac{\rho}{2\sqrt{2}}\norm{x_0 - x_*}^2 - \frac{1}{2}(\norm{u}^2 + \Tr(VSV^T))\,,
\end{equation}
where for shorthand $u := x_0 - x_* - \sum_{i=0}^{n-1} \alpha_i g_i - \rho^k g_n - \sum_{j=1}^n c_j s_j - s_*$ with $c := c^{(k)}$ (Definition~\ref{def:ck}), and 
$V := \begin{bmatrix}
    x_0 - x_* \ | \  s_1\  |\ \dots \ | \ s_n \ | \ s_* 
\end{bmatrix}$ is the columnwise-concatenated matrix.

\end{theorem}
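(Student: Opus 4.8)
The plan is to prove Theorem~\ref{thm:maincert} by strong induction on $k$, following the recursive gluing paradigm: the multipliers $\lambda^{(k)}$, $\mu^{(k)}$ and the slack matrix $S^{(k)}$ for horizon $n = 2^k-1$ will be assembled from two copies of the corresponding objects at horizon $2^{k-1}-1$, plus $O(1)$ corrections, mirroring the recursive construction $\pi^{(k+1)} = [\pi^{(k)}, \rho^{k-1}+1, \pi^{(k)}]$ of the stepsizes. First I would set up the base case $k=1$ (a single proximal step with stepsize $\alpha_0 = 2(\rho-1)$... wait, $\pi^{(1)}$ is empty and $n=1$ is a degenerate one-step horizon), verifying identity~\eqref{eq:maincert} directly by expanding $Q^f$, $Q^h$, the squared term $\norm{u}^2$, and the Laplacian form $\Tr(VSV^T)$ in terms of the iterate, gradient, subgradient, and function-value variables, and checking that all polynomial coefficients match. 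This base case also pins down the normalization constants $\rho/(2\sqrt2)$ and $2\rho^k-1$.

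For the inductive step, I would write the silver schedule of length $2^{k+1}-1$ as the concatenation of a first block (indices $0,\dots,2^k-2$, running $\pi^{(k)}$), a single ``bridge'' step at index $2^k-1$ with stepsize $\rho^{k-1}+1$, and a second block (indices $2^k,\dots,2^{k+1}-2$, running $\pi^{(k)}$ again). The key point, inherited from the GD analysis of~\citep{ap23a,ap23b}, is that $\lambda^{(k+1)}$ decomposes as a block-diagonal ``recursive'' part (two shifted copies of $\lambda^{(k)}$ acting on the two blocks, with the second block's reference point being the post-bridge iterate rather than $x_*$), a sparse correction touching only $O(1)$ entries near the bridge step, and a low-rank correction affecting only $O(1)$ rows. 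I would reuse precisely these $\lambda^{(k+1)}$, then construct $\mu^{(k+1)}$ by the same three-part recipe — this requires exhibiting the explicit formula for $\mu^{(k)}$ (Definition~\ref{def:mu}) and checking its nonnegativity, which should follow from nonnegativity of $\mu^{(k)}$ together with Lemma~\ref{lem:ck}(b) ($c^{(k)} \geq \pi^{(k)}$) controlling the sign of the corrections. The auxiliary sequence $c^{(k)}$ enters here: the coefficients of the subgradients $s_j$ in the residual vector $u$ are exactly $c^{(k)}_j$, and its recursive definition (Definition~\ref{def:ck}) is reverse-engineered so that the glued residual telescopes correctly. I would then verify that, after substituting the recursive forms, the left-hand side of~\eqref{eq:maincert} minus the two ``inherited'' sub-identities reduces to an explicit polynomial supported on $O(1)$ variables, whose coefficients I match by direct computation — this is the ``verify only $O(1)$ coefficients'' step.

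The main obstacle, and the genuinely new ingredient relative to the GD analysis, is part~(ii): showing $S^{(k)} \succeq 0$ where $S^{(k)}$ is the high-rank slack matrix appearing as $\Tr(VS^{(k)}V^T)$. Unlike GD, where the slack is a single square, here any SOS decomposition needs $\Omega(n)$ terms, so I cannot just write down one square. The approach is the one flagged in \S\ref{subsec:techniques}: show that, modulo a Schur complement with respect to the $x_0-x_*$ coordinate (which is where the rank-$1$ piece $\norm{u}^2/2$ and the first column of $V$ interact), $S^{(k)}$ equals a graph Laplacian $L^{(k)}$ on the subgradient coordinates $s_1,\dots,s_n,s_*$ plus a PSD rank-$1$ term. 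Laplacians are automatically PSD, so it suffices to (a) identify the underlying weighted graph — I expect a path-like or recursively-built tree on $\{1,\dots,n,*\}$ whose edge weights are positive combinations of stepsizes and $c^{(k)}$ entries, and (b) check the edge weights are nonnegative, again leaning on $c^{(k)} \geq \pi^{(k)}$. Establishing that the Schur complement works out cleanly — i.e. that subtracting off the $x_0-x_*$ row/column really does leave a Laplacian, with the recursion on $L^{(k)}$ matching the recursion on $c^{(k)}$ — is the crux, and I would prove it by induction in tandem with the rest: $L^{(k+1)}$ is built from two copies of $L^{(k)}$ glued at the bridge vertex plus $O(1)$ new edges, so PSD-ness propagates. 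Finally, assembling (i), (ii), (iii) and invoking $Q^f_{ij}, Q^h_{ij} \geq 0$ (Lemma~\ref{lem:coco}) and dropping the nonnegative $\frac12(\norm{u}^2 + \Tr(VS V^T))$ term yields Theorem~\ref{thm:main} after rearranging and using $\sum \pi^{(k)} = \rho^k - 1$ (Lemma~\ref{lem:silver}).
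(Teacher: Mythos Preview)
Your proposal is correct and follows essentially the same route as the paper: induction on $k$ with a direct base case, recursive gluing (recursion $+$ sparse $+$ low-rank) for both $\lambda^{(k+1)}$ and $\mu^{(k+1)}$ with nonnegativity via $c^{(k)}\ge\pi^{(k)}$, reduction of the identity verification to $O(1)$ coefficients, and PSD-ness of $S^{(k)}$ via a Schur complement in the $x_0-x_*$ coordinate leaving a Laplacian. Two small corrections: for $k=1$ the schedule $\pi^{(1)}$ is not empty but the single step $\alpha_0=\sqrt{2}=\rho-1$ (not $2(\rho-1)$, which is $c^{(1)}$); and the Schur complement of $S^{(k)}$ is $L^{(k)}$ \emph{minus} a rank-$1$ Laplacian (not plus a PSD rank-$1$), with the difference itself shown to be Laplacian.
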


Theorem \ref{thm:maincert} immediately implies the main result (Theorem \ref{thm:main}).

\begin{proof}[Proof of Theorem \ref{thm:main}] The left hand side of the multi-step descent identity \eqref{eq:maincert} is nonnegative since for all $i \neq j$, the co-coercivities $Q_{ij}^f, Q_{ij}^h$ are nonnegative (Lemma \ref{lem:coco}) and so are the multipliers $\lambda_{i, j}, \mu_{i, j}$ (by item (i)). The term $\|u\|^2 + \Tr(VSV^T)$ on the right hand side is also nonnegative as a sum of squares, since $S= S^{(k)}$ is positive semidefinite (by item (ii)). Therefore, by rearranging,~\eqref{eq:maincert} implies the desired bound $
    F_n - F_* \leq \frac{\rho}{\sqrt{2}(4\rho^k-2)} \norm{x_0 - x_*}^2\leq \frac{\rho}{4\sqrt{2}n^{\log_2 \rho}} \norm{x_0 - x_*}^2$.
\end{proof}

The rest of the section is devoted to establishing Theorem \ref{thm:maincert}. In \S\ref{ssec:multipliers} we define the multipliers $\lambda, \mu$ and prove their nonnegativity (item (i)). In \S\ref{ssec:sos} we define the coefficient matrix $S$ for the SOS term and prove that $S$ is positive semidefinite (item (ii)). In \S\ref{ssec:verification} we prove the multi-step descent identity (item (iii)). The definitions are recursive in $k$ and the proofs follow by induction on $k$; the base case $k=1$ for Theorem~\ref{thm:maincert} amounts to an elementary identity and is deferred to Appendix \ref{appendix:basecase}.

\subsection{Construction of multipliers via recursive gluing}\label{ssec:multipliers}

First, we define the multipliers for the co-coercivities $Q_{ij}^f$ associated with $f$. Although we are considering the more general setting of proximal GD for composite optimization, it turns out that we can use the same multipliers from previous works on GD for single-objective convex optimization. Common to all such analyses, $\lambda^{(k+1)}$ is constructed via recursive gluing as a sum of three components: a recursion component which glues two copies from $\lambda^{(k)}$, a sparse correction which has $O(1)$ nonzero entries, and a low-rank correction which affects only $O(1)$ rows. The original recursive gluing construction of~\citet{ap23b} was refined recently in~\citet{rppa, gsw24}, and we use this version here; in particular, the sparse correction has exactly two nonzero entries, and the low-rank correction contains two rows that are multiples of $\pi^{(k)}$. For bookkeeping purposes, it is convenient to isolate the bottom row $\{\lambda_{*,j}\}$ of $\lambda$ into a separate vector $\ulambda$. See Figure~\ref{fig:cert}, left, in Appendix \ref{appendix:figure} for an illustration.

\begin{definition}[Multipliers for $f$]\label{def:lambda} For $k \in \N$ and $n = 2^k-1$, define $\lambda_{i, j}^{(k)}, i \in \{0, 1, \dots, n, *\}, j \in \{0, 1, \dots, n\}$ as
\[\lambda_{i, j}^{(k)} := \blambda_{i, j}^{(k)}\bm{1}_{\{i \neq *\}} + \ulambda_j^{(k)}\bm{1}_{\{i = *\}}\,,
\]
where $\ulambda^{(k)} := [\pi^{(k)}, \rho^k]$ and $\blambda^{(1)} := \ \begin{bNiceArray}{cc}[first-row, first-col]
        & 0 & 1 \\
        0 & 0 & \rho \\
        1 & 1 & 0 \\
    \end{bNiceArray}$,
\begin{align*}
    \blambda^{(k+1)} &:= \underbrace{\blambda^{(k+1), \rec}}_{\text{recursion}} \;\;\;+ \underbrace{\blambda^{(k+1), \sparse}}_{\text{sparse correction}} + \underbrace{\blambda^{(k+1), \lr}}_{\text{low-rank correction}}\,.
\end{align*}
The multipliers $\blambda^{(k+1), \rec}, \blambda^{(k+1), \sparse}, \blambda^{(k+1), \lr}$ are defined as
\begin{align*}
    \blambda^{(k+1), \rec}_{i, j} &:= \blambda^{(k)}_{i, j} \bm{1}_{\{0 \leq i, j \leq n\}} + \rho^2 \blambda^{(k)}_{i-n-1, j-n-1}\bm{1}_{\{n+1 \leq i, j \leq 2n + 1\}}\,, \\
    \blambda^{(k+1), \sparse}_{i, j} &:= \rho \bm{1}_{\{(i, j) = (n, 2n+1)\}} + \rho^k \bm{1}_{\{(i, j) = (2n+1, n)\}}\,, \\
    \blambda^{(k+1), \lr}_{i, j} &:= \rho \pi^{(k)}_{j-n} \bm{1}_{\{i = n, n+1 \leq j \leq 2n\}} + \rho \pi^{(k)}_{j-n} \bm{1}_{\{i = 2n+1, n+1 \leq j \leq 2n\}}\,.
\end{align*}
\end{definition}

Next, we recursively define the multipliers for the co-coercivities $Q_{ij}^{h}$ associated with $h$, which are new to our analysis for \emph{proximal} GD. Similarly to $\lambda^{(k+1)}$, this construction of $\mu^{(k+1)}$ is based on recursive gluing: we use a recursion component that glues two copies from $\mu^{(k)}$, a sparse correction component which has three nonzero entries, and a low-rank correction component which is comprised of linear combinations of $\pi^{(k)}$ and $c^{(k)}$. As for $\lambda$, for bookkeeping purposes we isolate the bottom row $\{\mu_{*,j}\}$ into a separate vector $\umu$. See Figure~\ref{fig:cert}, right, in Appendix \ref{appendix:figure} for an illustration.

\begin{definition}[Multipliers for $h$]\label{def:mu} For $k \in \N$ and $n = 2^k-1$, define $\mu_{i, j}^{(k)}, i \in \{1, \dots, n, *\}, j \in \{1, \dots, n\}$ as
\[\mu^{(k)}_{i, j} := \bmu^{(k)}_{i, j} \bm{1}_{\{i \neq *\}} + \umu^{(k)}_j \bm{1}_{\{i = *\}}\,,
\]
where $\umu^{(k)} := c^{(k)} + e_1$ and $\bmu^{(1)} := \ \begin{bNiceArray}{cc}[first-row, first-col]
        & 1 \\
        1 & 0 \\
    \end{bNiceArray}$,
\begin{align*}
    \bmu^{(k+1)} &:= \underbrace{\bmu^{(k+1), \rec}}_{\text{recursion}} \;\;\;+ \underbrace{\bmu^{(k+1), \sparse}}_{\text{sparse correction}} + \underbrace{\bmu^{(k+1), \lr}}_{\text{low-rank correction}}\,.
\end{align*}
The multipliers $\bmu^{(k+1), \rec}, \bmu^{(k+1), \sparse}, \bmu^{(k+1), \lr}$ are defined as
\begin{align*}
    \bmu^{(k+1), \rec}_{i, j} &:= \bmu^{(k)}_{i, j}\bm{1}_{\{1 \leq i, j \leq n\}} + \rho^2\bmu^{(k)}_{i-n-1, j-n-1}\bm{1}_{\{n+2 \leq i, j \leq 2n+1\}}\,, \\
    \bmu^{(k+1), \sparse}_{i, j} &:= \rho^k \bm{1}_{\{(i, j) = (n, n+1)\}} + \rho^2 \bm{1}_{\{(i, j) = (n+1, n+2)\}} + (\rho-\frac{1}{\rho^k})(\rho^{k-1}+1)\bm{1}_{\{(i, j) = (2n+1, n+1)\}}\,, \\
    \bmu^{(k+1), \lr}_{i, j} &:= (1-\frac{\rho^k}{\rho^{k-1}+1})(c^{(k)}_j - \pi^{(k)}_j)\bm{1}_{\{i = n, 1 \leq j \leq n\}} + \frac{\rho^k}{\rho^{k-1}+1}(c^{(k)}_j - \pi^{(k)}_j)\bm{1}_{\{i = n+1, 1 \leq j \leq n\}} \\
    &\quad+ \frac{\rho^k}{\rho^{k-1}+1}\pi^{(k)}_{j-n-1}\bm{1}_{\{i = n, n+2 \leq j \leq 2n+1\}} + \frac{\rho}{\rho^{k-1}+1}\pi^{(k)}_{j-n-1}\bm{1}_{\{i = n+1, n+2 \leq j \leq 2n+1\}} \\
    &\quad+ ((\rho+1)c^{(k)}_{j-n-1} - (1+\frac{1}{\rho^k})\pi^{(k)}_{j-n-1})\bm{1}_{\{i = 2n+1, n+2 \leq j \leq 2n+1\}}\,.
\end{align*}
\end{definition}

\begin{lemma}[Nonnegativity of multipliers]\label{lem:multpositive} For all $k \in \N$ and $i \neq j$,
    $\lambda_{i,j}^{(k)} \geq 0$ and $\mu_{i,j}^{(k)} \geq 0$.
\end{lemma}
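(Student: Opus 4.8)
The plan is to prove both claims by induction on $k$, exploiting the three-part recursive-gluing decomposition of $\blambda^{(k+1)}$ and $\bmu^{(k+1)}$. The base case $k=1$ is immediate from the explicit $2\times 2$ matrices in Definitions~\ref{def:lambda} and~\ref{def:mu} together with $\ulambda^{(1)} = [\alpha_0, \rho]$ and $\umu^{(1)} = c^{(1)} + e_1 = [2(\rho-1)+1]$, all of whose entries are manifestly positive since $\rho = 1+\sqrt 2 > 1$. For the inductive step, I would handle $\lambda$ and $\mu$ separately but in parallel, treating the bottom row (the vectors $\ulambda, \umu$) and the matrix part ($\blambda, \bmu$) one at a time.

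For $\ulambda^{(k)} = [\pi^{(k)}, \rho^k]$: nonnegativity is immediate because every silver stepsize $\alpha_t = 1 + \rho^{\nu(t+1)-1} > 0$, so $\pi^{(k)} \geq 0$ entrywise, and $\rho^k > 0$. For $\umu^{(k)} = c^{(k)} + e_1$: by Lemma~\ref{lem:ck}(b) we have $c^{(k)} \geq \pi^{(k)} \geq 0$ entrywise, so $\umu^{(k)} \geq e_1 \geq 0$; in fact $\umu^{(k)}$ is strictly positive. For the matrix part $\blambda^{(k+1)}$, I would argue entrywise: the recursion component $\blambda^{(k+1),\rec}$ is nonnegative by the inductive hypothesis on $\blambda^{(k)}$ (note $\rho^2 > 0$); the sparse component $\blambda^{(k+1),\sparse}$ has its two nonzero entries equal to $\rho$ and $\rho^k$, both positive; and the low-rank component $\blambda^{(k+1),\lr}$ has entries that are positive multiples of entries of $\pi^{(k)}$, hence nonnegative. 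Since the supports of these three components are either disjoint or overlap only in a way that adds nonnegative quantities, their sum is nonnegative off the diagonal, which is all that is claimed.

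The same strategy applies to $\bmu^{(k+1)}$, and this is where I expect the only genuine subtlety. The recursion and sparse components are nonnegative by the same reasoning as above (the sparse entries are $\rho^k$, $\rho^2$, and $(\rho - \tfrac{1}{\rho^k})(\rho^{k-1}+1)$, and one checks $\rho - \tfrac{1}{\rho^k} > 0$ since $\rho > 1 \geq \tfrac{1}{\rho^k}$). The low-rank component $\bmu^{(k+1),\lr}$, however, involves the coefficients $1 - \tfrac{\rho^k}{\rho^{k-1}+1}$ and $\tfrac{\rho^k}{\rho^{k-1}+1}$ multiplying the differences $c^{(k)}_j - \pi^{(k)}_j$, as well as $(\rho+1)c^{(k)}_{j-n-1} - (1+\tfrac{1}{\rho^k})\pi^{(k)}_{j-n-1}$. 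Here one must verify: (a) the differences $c^{(k)}_j - \pi^{(k)}_j \geq 0$, which is exactly Lemma~\ref{lem:ck}(b); (b) the scalar $1 - \tfrac{\rho^k}{\rho^{k-1}+1} \geq 0$, i.e. $\rho^{k-1}+1 \geq \rho^k$, equivalently $1 \geq \rho^{k-1}(\rho - 1) = \rho^{k-1}\sqrt 2$ — \textbf{this fails for $k \geq 2$}, so the coefficient $1 - \tfrac{\rho^k}{\rho^{k-1}+1}$ is in fact \emph{negative}, and the term it multiplies must be reconciled against the positive contributions landing in the same entries (the relevant entries $\{i=n,\,1\le j\le n\}$ of $\bmu^{(k+1)}$ also receive contributions from $\bmu^{(k+1),\rec}$ via $\bmu^{(k)}_{n,j}$). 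So the real work is to show that for $i = n$, the sum $\bmu^{(k)}_{n,j} + (1-\tfrac{\rho^k}{\rho^{k-1}+1})(c^{(k)}_j - \pi^{(k)}_j) \geq 0$ for all $j$; and similarly for the entries $\{i = 2n+1\}$, that $\rho^2 \bmu^{(k)}_{\cdot,\cdot} + ((\rho+1)c^{(k)} - (1+\tfrac{1}{\rho^k})\pi^{(k)})$ is nonnegative. I would prove these by strengthening the induction hypothesis to include explicit lower bounds on the relevant rows of $\bmu^{(k)}$ — concretely, that $\bmu^{(k)}_{n,j} \geq (\tfrac{\rho^k}{\rho^{k-1}+1} - 1)(c^{(k)}_j - \pi^{(k)}_j)$ and an analogous bound on the last row — and then carry these auxiliary inequalities through the recursion alongside the nonnegativity claim, using Lemma~\ref{lem:silver}(b) and Definition~\ref{def:ck} to expand $c^{(k)}$ and $\pi^{(k)}$ and match terms. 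The main obstacle is thus bookkeeping: identifying the right auxiliary row-bounds so the induction closes, rather than any deep inequality.
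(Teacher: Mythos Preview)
Your proposal is correct and follows essentially the same approach as the paper: induction on $k$, with the only nontrivial entries being $\{\bmu^{(k+1)}_{n,j}\}_{1\le j<n}$ where the negative coefficient $1-\tfrac{\rho^k}{\rho^{k-1}+1}$ appears, handled by strengthening the inductive hypothesis with control on the row $\bmu^{(k)}_{n,\cdot}$. Two minor refinements relative to the paper's version: (i) your worry about the row $i=2n+1$ is unnecessary, since there the low-rank correction $(\rho+1)c^{(k)}_{j-n-1}-(1+\tfrac{1}{\rho^k})\pi^{(k)}_{j-n-1}$ is already nonnegative by $c^{(k)}\ge\pi^{(k)}$ and $\rho+1>1+\tfrac{1}{\rho^k}$; (ii) rather than carrying a lower bound, the paper proves the exact identity $\bmu^{(k)}_{n,j}=(\rho^k-1)(c^{(k)}_j-\pi^{(k)}_j)$ by induction (Lemma~\ref{lem:posmultnontrivial}), which makes closing the induction a one-line computation and gives $\bmu^{(k+1)}_{n,j}=\tfrac{\rho^{2k-1}}{\rho^{k-1}+1}(c^{(k)}_j-\pi^{(k)}_j)\ge 0$ directly.
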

\begin{proof}
 This follows by induction on $k$. The base case $k=1$ is trivial. For the inductive step, assuming $\lambda^{(k)}_{i, j} \geq 0$, clearly $\lambda^{(k+1)}_{i, j} \geq 0$ holds as the entries are constructed by adding nonnegative numbers. This is similarly true for nearly all entries of $\mu^{(k+1)}$ from the inequality $c^{(k)} \geq \pi^{(k)}$ (Lemma \ref{lem:ck}). It only remains to check the $n-1$ entries $\{\mu_{n,j}^{(k+1)}\}_{1 \leq j < n}$ where a nonpositive vector is added due to $\bmu^{(k+1), \lr}$; a quick calculation in Appendix~\ref{app:nonnegativity} shows that they are nonnegative.
\end{proof}

\subsection{Laplacian sum-of-squares}\label{ssec:sos}

Here we provide details on the slack matrix $S^{(k)}$ in the multi-step descent identity~\eqref{eq:maincert}. In particular we show that it is positive semidefinite, proving item (ii) of Theorem~\ref{thm:maincert}. Key to this result is the identification of a Laplacian matrix from its recursive structure.\footnote{Recall that a Laplacian matrix is a symmetric matrix such that all nondiagonal entries are nonpositive, and all row sums (equivalently, all column sums) are $0$. It is a classical fact that Laplacian matrices are positive semidefinite; this follows, e.g., from the Gershgorin circle theorem or from the observation $x^TLx = \sum_{i < j} (-L_{ij})(x_i - x_j)^2 \geq 0$. We refer to standard textbooks such as~\citet{chung1997spectral} for further background about Laplacian matrices.} We begin by defining $S^{(k)}$. Similarly to the constructions in \S\ref{ssec:multipliers}, this construction is recursive and involves $\pi^{(k)}$ and $c^{(k)}$.

\begin{definition}[Construction of $L^{(k)}$ and $S^{(k)}$]\label{def:laplacian} For $k \in \N$ and $n = 2^k-1$, define
\[L^{(k)} := \begin{bmatrix}
    \bar{L}^{(k)} & -(c^{(k)})^T \\
    -c^{(k)} & 2(\rho^k - 1)
\end{bmatrix} \in \R^{(n+1) \times (n+1)}\,,
\]
where $ \bar{L}^{(1)} := \begin{bmatrix}
        2(\rho-1)
    \end{bmatrix}$ and
\begin{align*}
    \bar{L}^{(k+1)} &:= \begin{bmatrix}
        \bar{L}^{(k)} + (c^{(k)} - \pi^{(k)})(c^{(k)} - \pi^{(k)})^T & \textcolor{lightgray}{\bm{0}_{n \times 1}} & \textcolor{lightgray}{\bm{0}_{n \times n}} \\
        \textcolor{lightgray}{\bm{0}_{1 \times n}} & \textcolor{lightgray}{0} & \textcolor{lightgray}{\bm{0}_{1 \times n}}\\
        \textcolor{lightgray}{\bm{0}_{n \times n}} & \textcolor{lightgray}{\bm{0}_{n \times 1}} & \rho^2 (\bar{L}^{(k)} + (c^{(k)} - \pi^{(k)})(c^{(k)} - \pi^{(k)})^T)
    \end{bmatrix} \\
    &\quad+ \begin{bmatrix}
    \textcolor{lightgray}{\bm{0}_{n \times n}} & -\rho^k(c^{(k)} - \pi^{(k)}) & \textcolor{lightgray}{\bm{0}_{n \times n}} \\
    -\rho^k(c^{(k)} - \pi^{(k)})^T & (\rho^{k-1}+1)(\rho^{k+1}+1) & -\rho (\pi^{(k)})^T \\
    \textcolor{lightgray}{\bm{0}_{n \times n}} & -\rho \pi^{(k)} & \textcolor{lightgray}{\bm{0}_{n \times n}} \\
    \end{bmatrix} \\
    &\quad- (c^{(k+1)} - \pi^{(k+1)})(c^{(k+1)} - \pi^{(k+1)})^T\,.
\end{align*}
Also, define
$S^{(k)} := \begin{bmatrix}
        \frac{1}{\sqrt{2}} & (-e_1 + e_{n+1})^T \\
        -e_1 + e_{n+1} & L^{(k)}
    \end{bmatrix} \in \R^{(n+2) \times (n+2)}
$.
\end{definition}

The following lemma proves several properties of these matrices, culminating in the fact that $S^{(k)}$ is positive semidefinite. The main idea is to observe that $L^{(k)}$ is Laplacian (by the recursive construction), and then use that to analyze $S^{(k)}$ by taking a Schur complement.

\begin{lemma}[Properties of $L^{(k)}$ and $S^{(k)}$]\label{lem:laplacian} 
    The following hold for all $k \in \N$.
    \begin{enumerate}[label=(\alph*)]
    \item All nondiagonal entries of $\bar{L}^{(k)} + (c^{(k)} - \pi^{(k)})(c^{(k)} - \pi^{(k)})^T$ are nonpositive (and thus similarly for $\bar{L}^{(k)}$ and $L^{(k)}$).  
    \item $L^{(k)}$ is Laplacian.
    \item $S^{(k)}$ is positive semidefinite.
\end{enumerate}
\end{lemma}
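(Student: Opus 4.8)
\textbf{Proof plan for Lemma~\ref{lem:laplacian}.}

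The plan is to prove parts (a), (b), (c) in that order by induction on $k$, since each part builds on the previous one and the recursive construction of $\bar L^{(k)}$ is tailor-made for this. For part (a), I would track the matrix $A^{(k)} := \bar L^{(k)} + (c^{(k)} - \pi^{(k)})(c^{(k)} - \pi^{(k)})^T$ rather than $\bar L^{(k)}$ directly, since this is the combination that appears (twice, once scaled by $\rho^2$) along the block diagonal of $\bar L^{(k+1)}$. Unfolding Definition~\ref{def:laplacian}, one sees that $\bar L^{(k+1)} + (c^{(k+1)} - \pi^{(k+1)})(c^{(k+1)} - \pi^{(k+1)})^T$ equals the block-diagonal matrix $\operatorname{diag}(A^{(k)}, \,(\rho^{k-1}+1)(\rho^{k+1}+1),\, \rho^2 A^{(k)})$ plus the off-diagonal ``gluing'' terms $-\rho^k(c^{(k)}-\pi^{(k)})$ (in the $n$ positions linking block~1 to the middle index), $-\rho(\pi^{(k)})^T$ (linking the middle index to block~3), and zeros elsewhere. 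The nondiagonal entries of $A^{(k)}$ are nonpositive by the inductive hypothesis; the new off-diagonal entries are nonpositive because $c^{(k)} \geq \pi^{(k)}$ (Lemma~\ref{lem:ck}(b)) and $\pi^{(k)} \geq 0$; and scaling $A^{(k)}$ by $\rho^2 > 0$ preserves the sign pattern. Since $(c^{(k+1)}-\pi^{(k+1)})(c^{(k+1)}-\pi^{(k+1)})^T$ has nonnegative entries, subtracting it only makes nondiagonal entries more negative, so in fact $\bar L^{(k+1)}$ itself (as well as the full matrix $L^{(k+1)}$, whose new border entries are $-c^{(k+1)} \leq 0$) has nonpositive off-diagonal entries. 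The base case $k=1$ is immediate since $\bar L^{(1)}=[2(\rho-1)]$ and $L^{(1)}$ has the single off-diagonal entry $-c^{(1)} = -2(\rho-1) < 0$.

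For part (b), given (a) it remains only to check that all row sums of $L^{(k)}$ are zero; positive semidefiniteness of a Laplacian is then the classical fact quoted in the footnote. The row sums of the bordered matrix $L^{(k)}$ are: for each of the first $n$ rows, $(\text{row sum of } \bar L^{(k)}) - c^{(k)}_i$; and for the last row, $-\sum c^{(k)} + 2(\rho^k-1) = 0$ by Lemma~\ref{lem:ck}(a). So it suffices to show the $i$-th row sum of $\bar L^{(k)}$ equals $c^{(k)}_i$, equivalently $\bar L^{(k)} \bone = c^{(k)}$. This I would prove by induction using the recursion: the row sums of $A^{(k)} = \bar L^{(k)} + (c^{(k)}-\pi^{(k)})(c^{(k)}-\pi^{(k)})^T$ are $c^{(k)} + (c^{(k)}-\pi^{(k)})\,(\sum c^{(k)} - \sum \pi^{(k)}) = c^{(k)} + (\rho^k-1)(c^{(k)}-\pi^{(k)})$ using Lemmas~\ref{lem:silver}(a) and~\ref{lem:ck}(a); then add up the three block contributions plus the gluing columns row by row, and subtract the row sums of $(c^{(k+1)}-\pi^{(k+1)})(c^{(k+1)}-\pi^{(k+1)})^T$, which are $(c^{(k+1)}-\pi^{(k+1)})(\sum c^{(k+1)} - \sum \pi^{(k+1)}) = (\rho^{k+1}-1)(c^{(k+1)}-\pi^{(k+1)})$. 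Matching the result against the recursive definition of $c^{(k+1)}$ in Definition~\ref{def:ck} (and of $\pi^{(k+1)}$ in Lemma~\ref{lem:silver}(b)) is a finite computation in the three blocks; I expect this bookkeeping — reconciling the middle entry $(\rho^{k-1}+1)(\rho^{k+1}+1) - \rho^k(\sum c^{(k)}-\sum\pi^{(k)}) - \rho(\sum\pi^{(k)})$ with the prescribed middle entry $(1+\rho^{-k})(\rho^{k-1}+1)$ of $c^{(k+1)}$ — to be the main obstacle, in the sense that it is where a sign or a $\rho$-power is easiest to drop; the algebra itself is elementary once one uses $\rho^2 = 2\rho+1$.

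Finally, for part (c), with $L^{(k)}$ known to be positive semidefinite I take the Schur complement of $S^{(k)}$ with respect to its top-left $1\times 1$ block $\tfrac{1}{\sqrt 2} > 0$: $S^{(k)} \succeq 0$ iff $L^{(k)} - \sqrt 2\,(-e_1+e_{n+1})(-e_1+e_{n+1})^T \succeq 0$. The rank-one correction subtracts $\sqrt 2$ from the $(1,1)$ and $(n+1,n+1)$ entries, adds $\sqrt 2$ to the $(1,n+1)$ and $(n+1,1)$ entries, and leaves everything else unchanged; one checks this keeps all row sums zero and all off-diagonal entries nonpositive (the entries $(1,n+1)$ and $(n+1,1)$ of $L^{(k)}$ are $-c^{(k)}_1 = -2(\rho-1)$, which by the recursion is $\leq -\sqrt 2$ — indeed $c^{(k)}_1 = c^{(1)}_1 = 2(\rho-1) = 2\sqrt 2 \geq \sqrt 2$ since the first entry is fixed along the recursion — so after adding $\sqrt 2$ the entry is still $\leq 0$). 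Hence the Schur complement is again a Laplacian matrix and therefore positive semidefinite, which gives $S^{(k)} \succeq 0$ and completes the proof.
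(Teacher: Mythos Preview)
Your approach is essentially identical to the paper's: induction on $k$ for (a), verification that all row sums of $L^{(k)}$ vanish for (b) (the paper defers the block-by-block bookkeeping to an appendix, just as you sketch), and for (c) the Schur complement with respect to the positive $(1,1)$ entry, shown to be itself a Laplacian. One small correction in your part (c): the first entry of $c^{(k)}$ is \emph{not} fixed along the recursion---by Definition~\ref{def:ck} the first $n$ entries of $c^{(k+1)}$ are $\pi^{(k)}$, not $c^{(k)}$, so $c^{(k)}_1 = \pi^{(k-1)}_1 = \alpha_0 = \sqrt{2}$ for all $k \geq 2$ (while $c^{(1)}_1 = 2\sqrt{2}$); fortunately $c^{(k)}_1 \geq \sqrt{2}$ still holds (with equality for $k\geq 2$), so your Schur-complement argument goes through unchanged.
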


\begin{proof}
We prove the items in order. \underline{Proof of (a).} This follows by induction. The base case $k=1$ is clear by inspection. The induction step follows from the recursive definition of $\bar{L}^{(k)}$ and the fact that $c^{(k)} \geq \pi^{(k)}$ by Lemma \ref{lem:ck}. \underline{Proof of (b).} By item (a), it suffices to show that each row sum of $L^{(k)}$ is 0. Note that for any $k$, the sum of the final row is $-\sum c^{(k)} + 2(\rho^k-1) = 0$ by Lemma \ref{lem:ck}. The other rows also sum to zero, as can be checked via a straightforward calculation from the recursive construction; details in Appendix \ref{appendix:laplacian}. \underline{Proof of (c).} Since the top-left entry of $S^{(k)}$ is positive, it suffices to check that the corresponding Schur complement $L^{(k)} - \sqrt{2}(-e_1 + e_{n+1})(-e_1 + e_{n+1})^T$ is positive semidefinite. In particular, we show that this matrix is Laplacian. Since both $L^{(k)}$ and $\sqrt{2}(-e_1 + e_{n+1})(-e_1 + e_{n+1})^T$ are Laplacian by item (b), each row sum of the Schur complement is 0. For the nondiagonal entries, it suffices to check that the $(1, n+1)$ entry of the Schur complement is nonpositive; this is straightforward since $c^{(k)}_1 \geq \sqrt{2}$ for all $k$.
\end{proof}

\subsection{Verification of the multi-step descent identity}\label{ssec:verification}

Here we provide a proof sketch of item (iii) of Theorem~\ref{thm:maincert}; full details are deferred to Appendix~\ref{app:verification}. Item (iii) requires verifying the multi-step descent identity~\eqref{eq:maincert}, which amounts to checking that the coefficients match on both sides of the identity. This identity has two components: a linear form (in the function evaluations $\{f_i\}$ and $\{h_i\}$), plus a quadratic form (in the gradient evaluations $\{g_i\}$, subgradient evaluations $\{s_i\}$, and initial distance $x_0 - x_*$). Thus, it suffices to separately check that the coefficients match for the linear and quadratic forms.

\par This is simple for the linear form. The coefficients in $\{f_i\}$ match since they are linear combinations of only the multipliers $\{\lambda_{i, j}\}$. As they are identical to those for the analysis of vanilla GD, we can appeal to existing results. 
The coefficients in $\{h_i\}$ can be seen to match by simply expanding and collecting terms from the definition of co-coercivities; see Appendix \ref{appendix:linear} for details.

\par For the quadratic form, verifying the identity requires checking that $\Theta(n^2)$ coefficients match, for $n = 2^{k}-1$. A key observation that makes this verification tractable is that if we combine terms in a certain order, then both sides of the identity become ``succinct'' quadratic polynomials in $O(1)$ variables; in particular, it suffices to check that the corresponding $O(1)$ coefficients match. This conciseness is crucially due to the construction of the multipliers based on the recursive gluing technique. The proof therefore amounts to 1) computing the \emph{constant} number of coefficients of these variables on both sides of the identity, and 2) checking that they match. Both steps are conceptually simple (albeit tedious); details are deferred to Appendices~\ref{appendix:quadsetup} and~\ref{appendix:quadproof}, respectively.

\acks{This work was partially supported by a Seed Grant Award from Apple.}

\bibliography{ref}

\appendix

\newpage
\appendix

\section{Deferred details}

\subsection{Illustration of the recursive gluing}\label{appendix:figure}

Here we provide a visual representation of the individual components of the multipliers defined in \S\ref{ssec:multipliers} by recursive gluing.

\begin{figure}[H]

\centering

  \subfigure[\footnotesize $\lambda^{(k+1)}$]{\includegraphics[width=0.45\textwidth]{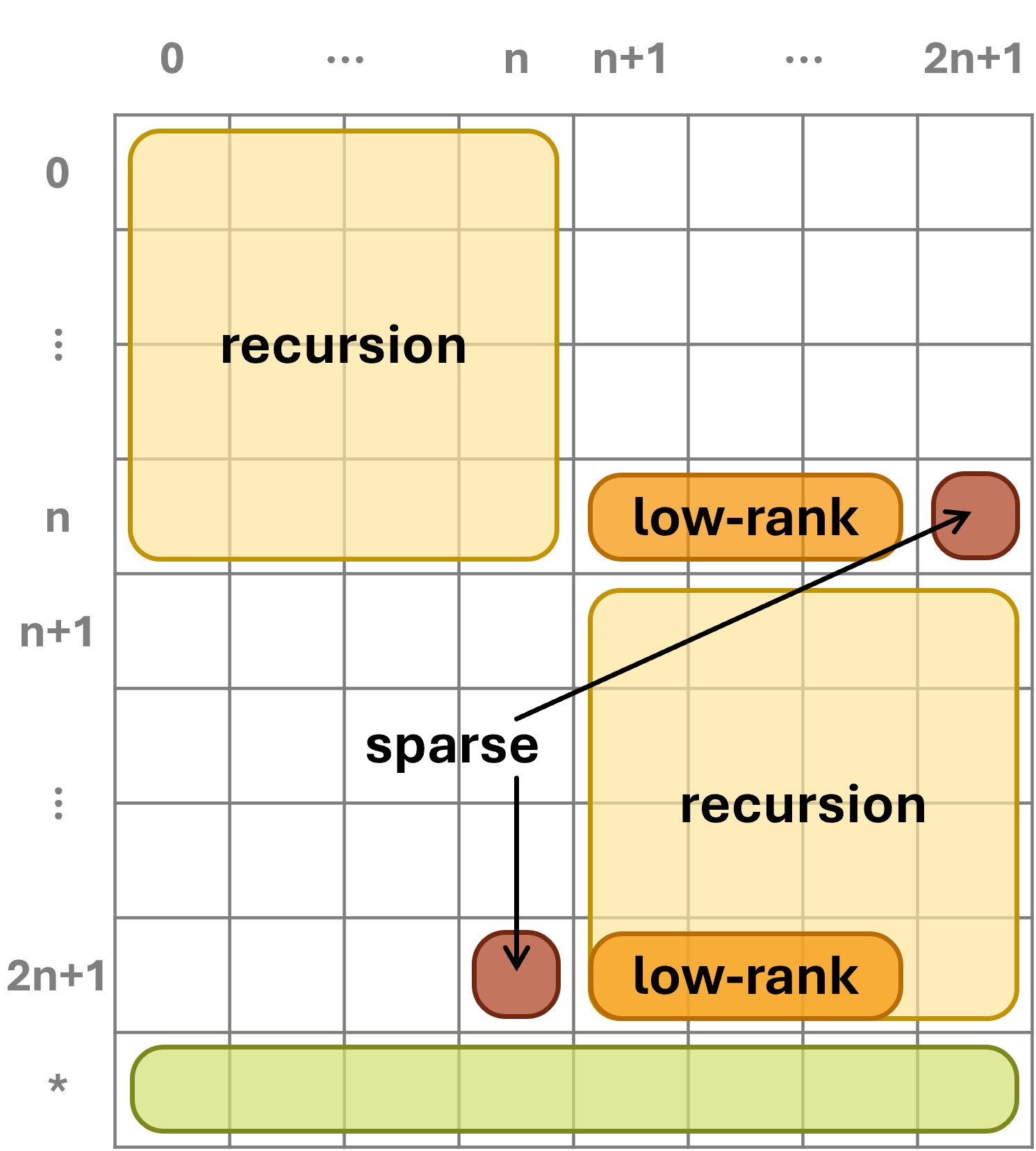}
    \label{fig:lambda}}
  \subfigure[\footnotesize $\mu^{(k+1)}$]{\includegraphics[width=0.4\textwidth]{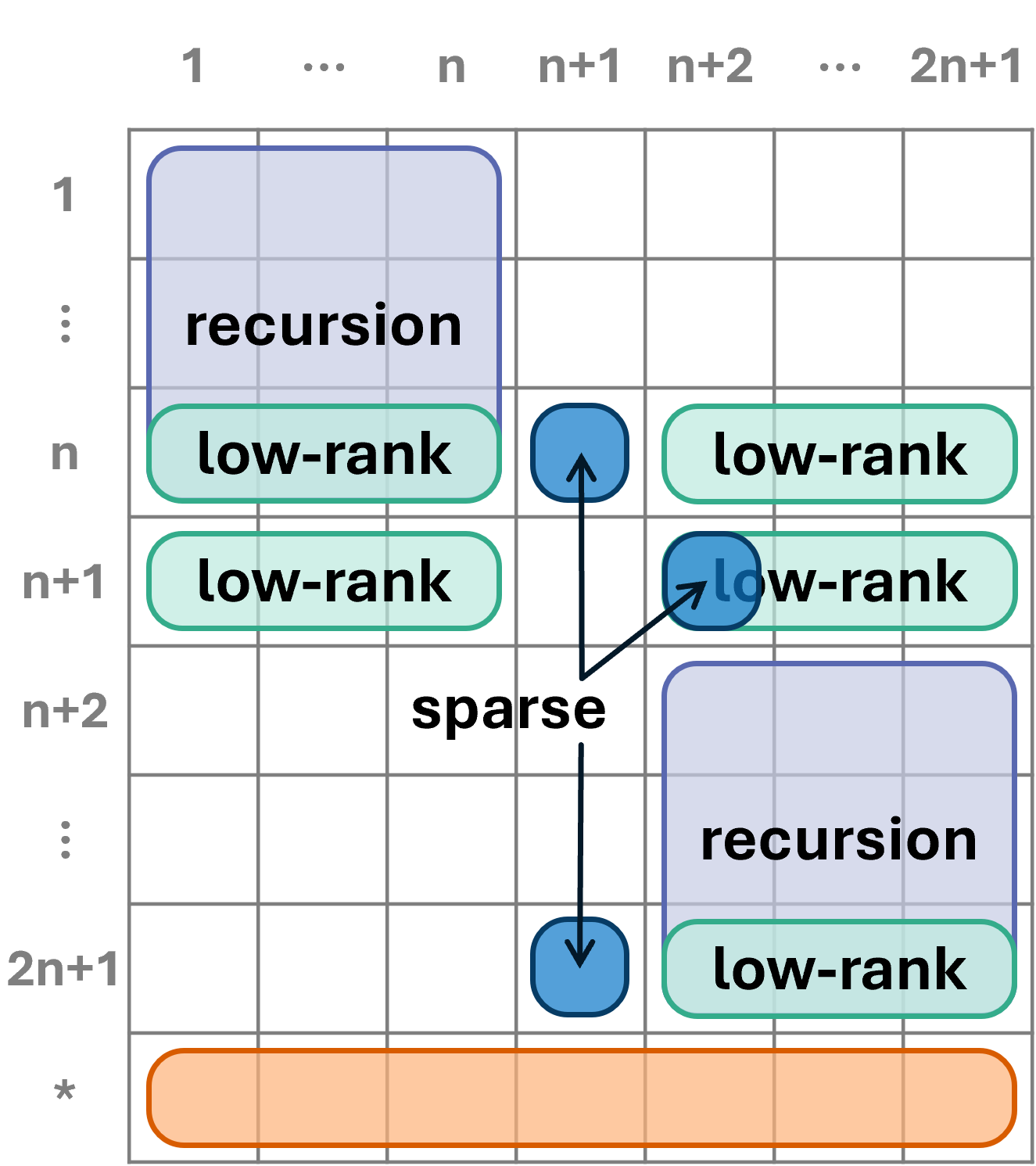}
    \label{fig:mu}}
    
   \caption{\footnotesize Illustration of the structure of $\lambda^{(k+1)}$ (left) and $\mu^{(k+1)}$ (right). Both are constructed via the \emph{recursive gluing} technique, which combines three components. The recursion component glues two copies from $\lambda^{(k)}$ or $\mu^{(k)}$, respectively. The sparse component consists of $O(1)$ nonzero entries. The low-rank component consists of linear combinations of $\pi^{(k)}$ and $c^{(k)}$. Intuitively, the latter two components enable ``gluing'' the two recursive components by controlling the long-range effect of the large step in the middle. Altogether, this allows inductively establishing the desired convergence rate for $2n+1 = 2^{k+1}-1$ given $n = 2^k-1$.}
   \label{fig:cert}
\end{figure}

\subsection{Tightness 
of the silver convergence rate
}\label{appendix:lowerbound}

    Theorem \ref{thm:main} is asymptotically tight in that the exponent $\log_2 \rho$ in the rate $O(n^{-\log_2 \rho})$ is optimal for proximal GD with the silver stepsize schedule. Moreover, the hidden constant in the asymptotic rate is tight up to a modest constant. Indeed, there exists a problem instance such that 
    \begin{align}
        F(x_n) - F(x_*) = \frac{M}{4\rho^k - 4}\norm{x_0 - x_*}^2\,,
        \label{eq:tight}
    \end{align}
    which matches the aforementioned exponent since $\rho^k = \rho^{\log_2 (n+1)} \asymp \rho^{\log_2 n} = n^{\log_2 \rho}$. This bound~\eqref{eq:tight} is achieved by the following $1$-dimensional problem (normalized with $M=1$ and $\norm{x_0 - x_*}^2 = 1$ without loss of generality) from \citet[Theorem 2.9]{drorithesis} which is often used for lower bounds on proximal GD. Consider
    \[
        \min_{x \in \R} F(x) = f(x) + h(x)\,,
    \]
    where $f(x) = ax$ is a linear function with slope $a = \frac{1}{2(\rho^k - 1)}$, and $h(x) = \iota\{x \geq 0\}$ is the indicator function for $x \geq 0$. Observe that $x_* = 0$, and that from initialization $x_0 = 1$, the final iterate of proximal GD is $x_n = 1 - a(\rho^k - 1)$, implying $F(x_n) - F(x_*) = a (1 - a(\rho^k - 1))$. Maximizing this rate over the slope $a \geq 0$ gives the aforementioned value of $a$ and the rate~\eqref{eq:tight}.

    \par We make three remarks about this ``hard'' problem instance. First, since $h$ is the indicator of a convex set, this construction can also be viewed as a hard problem $\min_{x \geq 0} f(x)$ for projected GD. Second, by running semidefinite programs~\citep{dt14, thg17} to numerically estimate the worst-case convergence rate of proximal GD with the silver stepsizes, it appears that~\eqref{eq:tight}
    is the \emph{exactly} tight bound for this algorithm. Third, we compare the constants in this rate~\eqref{eq:tight} for proximal GD with the silver stepsizes, to the exact rate $f(x_n) - f(x_*) \leq \frac{M}{4\rho^k - 2}\norm{x_0 - x_*}^2$ of vanilla GD with the silver stepsizes~\citep{rppa}. The former is achieved by a linear function (equivalently, the $\ell_1$ norm) constrained to the set $\{x \geq 0\}$, whereas the latter is achieved by a certain Huber loss function (i.e., the Moreau envelope of the $\ell_1$ norm) over $\R$. This minor difference in problem instances results in the minor difference between the denominators in the rates: $4\rho^k - 4$ versus $4 \rho^k - 2$. This discrepancy commonly arises between an algorithm and its proximal counterpart; see, for example, the discussion after \citet[Theorem 1]{optista}.

\subsection{Nonnegativity of the multipliers}\label{app:nonnegativity}

Here we complete the proof of Lemma \ref{lem:multpositive} by verifying the nonnegativity of the $n-1$ entries $\{\mu^{(k+1)}_{n, j}\}_{1 \leq j < n}$. To make the induction step clear, we isolate the claim into its own lemma.

\begin{lemma}\label{lem:posmultnontrivial}
For $k \in \N$, let $\mu^{(k)}$ be as in Definition \ref{def:mu} and $n = 2^k-1$. Then for all $1 \leq j < n$,
\[\bmu^{(k)}_{n, j} = (\rho^k-1)(c^{(k)}_j - \pi^{(k)}_j)\,.
\]
As a corollary, for all $1 \leq j < n$,
\[\mu^{(k+1)}_{n, j} = \bmu^{(k+1), \rec}_{n, j} + \bmu^{(k+1), \lr}_{n, j} =  \bmu^{(k)}_{n, j} + (1-\frac{\rho^k}{\rho^{k-1} + 1})(c^{(k)}_j - \pi^{(k)}_j) = \frac{\rho^{2k-1}}{\rho^{k-1}+1}(c^{(k)}_j - \pi^{(k)}_j) \geq 0\,.
\]
\end{lemma}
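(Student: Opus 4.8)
The plan is to prove the closed form $\bmu^{(k)}_{n,j} = (\rho^k-1)(c^{(k)}_j - \pi^{(k)}_j)$ for $1 \le j < n$ by induction on $k$, and then read off the corollary about $\mu^{(k+1)}_{n,j}$ by unwinding the recursive definition of $\bmu^{(k+1)}$ (Definition~\ref{def:mu}) together with the algebraic identity $\rho^2 = 2\rho+1$ for the silver ratio. The base case $k=1$ is vacuous since there is no index $j$ with $1 \le j < 1$ (indeed $n = 1$), so the first nontrivial case is $k=2$, where $n=3$ and one checks $\bmu^{(2)}_{3,1}, \bmu^{(2)}_{3,2}$ directly against $(\rho^2-1)(c^{(2)}_j - \pi^{(2)}_j)$ using Definitions~\ref{def:mu} and~\ref{def:ck}.

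For the inductive step, assume the formula holds at level $k$; I want it at level $k+1$. The key observation is that the index $n' = 2^{k+1}-1 = 2n+1$ plays, at level $k+1$, the role that $n$ played at level $k$ \emph{in the left copy} of the recursive gluing: row $n'$ of $\bmu^{(k+1)}$ is the row indexed by the last coordinate of the first half $\pi^{(k)}$ block. Wait---more carefully, I need to track which row of $\bmu^{(k+1)}$ equals ``$\bmu^{(\cdot)}_{n',\cdot}$'' in the sense of the statement. The statement concerns $\bmu^{(k)}_{n,j}$ where $n = 2^k-1$ is the \emph{largest} row index; at level $k+1$ this is row $2n+1$. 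From Definition~\ref{def:mu}, row $2n+1$ of $\bmu^{(k+1)}$ receives contributions from $\bmu^{(k+1),\rec}$ (the scaled copy $\rho^2 \bmu^{(k)}_{i-n-1,j-n-1}$ for $n+2 \le i,j \le 2n+1$), from $\bmu^{(k+1),\sparse}$ (the entry at $(2n+1, n+1)$), and from $\bmu^{(k+1),\lr}$ (the two terms supported on $i = 2n+1$). So for $1 \le j < n$ the only relevant contribution to $\bmu^{(k+1)}_{2n+1, j}$ is zero---which is \emph{not} what the lemma claims. This tells me the lemma's index $n$ in $\bmu^{(k)}_{n,j}$ must be interpreted so that at level $k+1$ it is row $n$ of the $(k+1)$-sized object, i.e. the last row of the \emph{first} recursive copy, not row $2n+1$. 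I would re-read the index conventions in Definition~\ref{def:mu}: there $\mu^{(k)}_{i,j}$ has $i \in \{1,\dots,n,*\}$, and the lemma writes $\bmu^{(k)}_{n,j}$, so $n$ here is the top index $2^k - 1$ at each level consistently. Thus at level $k+1$ the relevant row is row $n = 2^k-1$ (not $2n+1$), which sits inside the first $\pi^{(k)}$-block and is exactly the ``$i = n$'' special row appearing in the definition of $\bmu^{(k+1),\sparse}$ (entry $(n,n+1)$) and $\bmu^{(k+1),\lr}$ (the first two terms, with the coefficient $(1 - \tfrac{\rho^k}{\rho^{k-1}+1})$ on $i=n$). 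This is precisely the computation already begun in the corollary: $\bmu^{(k+1),\rec}_{n,j} = \bmu^{(k)}_{n,j}$ for $1 \le j \le n$, and $\bmu^{(k+1),\lr}_{n,j} = (1 - \tfrac{\rho^k}{\rho^{k-1}+1})(c^{(k)}_j - \pi^{(k)}_j)$ for $1 \le j \le n$.

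So the inductive step reduces to: $\bmu^{(k+1)}_{n,j} = \bmu^{(k)}_{n,j} + (1 - \tfrac{\rho^k}{\rho^{k-1}+1})(c^{(k)}_j - \pi^{(k)}_j)$, and by the inductive hypothesis $\bmu^{(k)}_{n,j} = (\rho^k-1)(c^{(k)}_j - \pi^{(k)}_j)$, so $\bmu^{(k+1)}_{n,j} = \left(\rho^k - 1 + 1 - \tfrac{\rho^k}{\rho^{k-1}+1}\right)(c^{(k)}_j - \pi^{(k)}_j) = \left(\rho^k - \tfrac{\rho^k}{\rho^{k-1}+1}\right)(c^{(k)}_j - \pi^{(k)}_j) = \tfrac{\rho^{2k-1}}{\rho^{k-1}+1}(c^{(k)}_j - \pi^{(k)}_j)$, using $\rho^k \cdot \tfrac{\rho^{k-1}+1-1}{\rho^{k-1}+1} = \tfrac{\rho^{2k-1}}{\rho^{k-1}+1}$. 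To match the target $(\rho^{k+1}-1)(c^{(k+1)}_j - \pi^{(k+1)}_j)$, I use that for $1 \le j < n$ we are in the first half, so $c^{(k+1)}_j = \pi^{(k)}_j = \pi^{(k+1)}_j$ by the recursive construction (Lemma~\ref{lem:silver}(b)) and Definition~\ref{def:ck}---wait, that would make the difference zero. I need to instead use that $c^{(k+1)}_j - \pi^{(k+1)}_j$ on the first-half indices $1 \le j \le n$ equals $c^{(k)}_j - \pi^{(k)}_j$; checking Definition~\ref{def:ck}, the first block of $c^{(k+1)}$ is literally $\pi^{(k)}$, so $c^{(k+1)}_j - \pi^{(k+1)}_j = \pi^{(k)}_j - \pi^{(k)}_j = 0$ there, which is inconsistent unless $c^{(k)}_j = \pi^{(k)}_j$ for all such $j$, i.e. the difference vector $c^{(k)} - \pi^{(k)}$ is supported only on the second half. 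That is in fact true and is the cleanest route: I would first prove the auxiliary fact that $c^{(k)}_j = \pi^{(k)}_j$ for $1 \le j < n = 2^k-1$ (equivalently $c^{(k)} - \pi^{(k)}$ is supported on $\{2^{k-1}, \dots, 2^k-1\}$, or even just on a smaller set), directly from Definitions~\ref{def:ck} and~\ref{def:silver}'s recursion. Then \emph{both sides of the lemma are zero} for $1 \le j < n$ when $c^{(k)} - \pi^{(k)}$ vanishes on that range---but the corollary explicitly asserts $\mu^{(k+1)}_{n,j} = \tfrac{\rho^{2k-1}}{\rho^{k-1}+1}(c^{(k)}_j - \pi^{(k)}_j) \ge 0$, which is only interesting when this is nonzero, i.e. when $j$ ranges over the second half of the level-$k$ schedule (indices $2^{k-1} \le j < 2^k-1$). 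So the correct reading is: the lemma's range ``$1 \le j < n$'' with $n = 2^{k}-1$ is the honest range, most entries vanish trivially, and the content is the identity on the nontrivial entries; the induction still goes through via the three-term reduction above.

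\textbf{Main obstacle.} The genuine difficulty is bookkeeping the index ranges across the recursive gluing: correctly identifying which row of $\bmu^{(k+1)}$ corresponds to $\bmu^{(k)}_{n,\cdot}$, and verifying that on the range $1 \le j < n$ only $\bmu^{(k+1),\rec}$ and the $i=n$ part of $\bmu^{(k+1),\lr}$ contribute (in particular that $\bmu^{(k+1),\sparse}$ contributes nothing there, which is clear since its nonzero entries are at columns $n+1, n+2, n+1$). Once the index arithmetic is pinned down, the remaining algebra is the one-line manipulation $\rho^k - 1 + 1 - \tfrac{\rho^k}{\rho^{k-1}+1} = \tfrac{\rho^{2k-1}}{\rho^{k-1}+1}$ together with $\rho^2 = 2\rho+1$, and the nonnegativity is immediate from $c^{(k)} \ge \pi^{(k)}$ (Lemma~\ref{lem:ck}(b)) and $\rho > 1$. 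I would present the proof as: (1) state and prove by induction the identity $\bmu^{(k)}_{n,j} = (\rho^k-1)(c^{(k)}_j-\pi^{(k)}_j)$, with the base case and the three-term inductive reduction; (2) derive the corollary by substituting into the level-$(k+1)$ recursion and simplifying; (3) note nonnegativity. The whole argument is short modulo the careful index check, which is the part I would write out most explicitly.
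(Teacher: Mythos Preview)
There is a genuine index confusion that breaks your induction. The lemma, for each level $k$, speaks about row $n = 2^k - 1$ of $\bmu^{(k)}$; therefore at level $k+1$ the claim is about row $2n+1 = 2^{k+1}-1$ of $\bmu^{(k+1)}$, \emph{not} row $n$. You first correctly identify this, observing that $\bmu^{(k+1)}_{2n+1,j} = 0$ for $1 \le j \le n$, but then talk yourself out of it because you think ``this is not what the lemma claims''. In fact it is exactly what the lemma claims: for $1 \le j \le n$ the first block of $c^{(k+1)}$ is $\pi^{(k)}$ (Definition~\ref{def:ck}), so $c^{(k+1)}_j - \pi^{(k+1)}_j = 0$ there and the right-hand side $(\rho^{k+1}-1)(c^{(k+1)}_j - \pi^{(k+1)}_j)$ vanishes too. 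After switching to row $n$, what you actually carry out is precisely the corollary computation $\mu^{(k+1)}_{n,j} = \tfrac{\rho^{2k-1}}{\rho^{k-1}+1}(c^{(k)}_j - \pi^{(k)}_j)$, which indeed follows from the lemma at level $k$, but this is not the lemma at level $k+1$ and cannot be matched to $(\rho^{k+1}-1)(c^{(k+1)}_j - \pi^{(k+1)}_j)$ (the latter is zero for those $j$, while your expression is generally nonzero). Your induction therefore never closes. Relatedly, your auxiliary claim that $c^{(k)}_j = \pi^{(k)}_j$ for all $1 \le j < n$ is false (e.g.\ $c^{(2)}_2 - \pi^{(2)}_2 = 2/\rho$); the difference vanishes only on the first half $1 \le j \le 2^{k-1}-1$, not up to $n-1$.

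The paper's proof does what you originally intended before reversing course: it computes $\bmu^{(k+1)}_{2n+1,j}$ for all $1 \le j < 2n+1$, handling three ranges separately. For $1 \le j \le n$ both sides are zero as above; for $j = n+1$ one checks directly that the sparse entry $(\rho - \tfrac{1}{\rho^k})(\rho^{k-1}+1)$ equals $(\rho^{k+1}-1)(c^{(k+1)}_{n+1}-\pi^{(k+1)}_{n+1})$; and for $n+2 \le j < 2n+1$ one combines the recursive contribution $\rho^2 \bmu^{(k)}_{n,j-n-1}$ (this is where the inductive hypothesis is actually used) with the low-rank term $(\rho+1)c^{(k)}_{j-n-1} - (1+\tfrac{1}{\rho^k})\pi^{(k)}_{j-n-1}$ and simplifies using the second-half formula for $c^{(k+1)} - \pi^{(k+1)}$. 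Once you fix the row index, the algebra is of exactly the sort you describe.
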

\begin{proof}
    We prove by induction. For the base case, note that for $k = 1$ no such entry exists; and for $k = 2$, we have $\bmu^{(2)}_{3, 1} = 0$ and $\bmu^{(2)}_{3, 2} = (\rho-\tfrac{1}{\rho})(\rho^0+1) = (\rho^2-1)\tfrac{\rho^0+1}{\rho} = (\rho^2-1)(c^{(2)}_2 - \pi^{(2)}_2)$.
    
    Assuming that the result holds for $k \geq 2$, by the recursive definition we have $\bmu^{(k+1)}_{2n+1, j} = 0$ for all $1 \leq j \leq n$ and $\bmu^{(k+1)}_{2n+1, n+1} = (\rho-\tfrac{1}{\rho^k})(\rho^{k-1}+1) = (\rho^{k+1}-1)\tfrac{\rho^{k-1}+1}{\rho^k} = (\rho^{k+1}-1)(c^{(k+1)}_{n+1} - \pi^{(k+1)}_{n+1})$. For $n+2 \leq j < 2n+1$,
    \begin{align*}
        \bmu^{(k+1)}_{2n+1, j} &= \rho^2 \bmu^{(k+1)}_{n, j-n-1} + (\rho+1)c^{(k)}_{j-n-1} - (1+\frac{1}{\rho^k})\pi^{(k)}_{j-n-1} \\
        &= \rho^2(\rho^k-1)(c^{(k)}_{j-n-1} - \pi^{(k)}_{j-n-1}) + (\rho+1)c^{(k)}_{j-n-1} - (1+\frac{1}{\rho^k})\pi^{(k)}_{j-n-1} \\
        &= (\rho^{k+1} - 1)(\rho c^{(k)}_{j-n-1} - (\rho - \frac{1}{\rho^k})\pi^{(k)}_{j-n-1})\\
        &= (\rho^{k+1}-1)(c^{(k+1)}_j - \pi^{(k+1)}_j)\,.
    \end{align*}
\end{proof}

\subsection{Laplacian structure}\label{appendix:laplacian}

Here we complete the proof of Lemma \ref{lem:laplacian} by showing that each row sum of $L^{(k)}$ is $0$, for all $k \in \N$. We use induction. The base case $k = 1$ is trivial. Now assume that each row sum of $L^{(k)}$ is 0 (which implies that the $j$th row sum of $\bar{L}^{(k)}$ is $c^{(k)}_j$), and consider the row sums of $L^{(k+1)}$. For this, we make repeated use of the definitions of $\pi^{(k)}$ and $c^{(k)}$ and their respective helper lemmas (Lemmas~\ref{lem:silver} and~\ref{lem:ck}) from \S\ref{ssec:helper}.

For $1 \leq j \leq n$, the $j$th row sum of $L^{(k+1)}$ is given by
\begin{align*}
    c^{(k)}_j + (\sum(c^{(k)} - \pi^{(k)}))(c^{(k)}_j - \pi^{(k)}_j) - \rho^k(c^{(k)}_j - \pi^{(k)}_j) - c^{(k+1)}_j = 0\,,
\end{align*}
from $\sum (c^{(k)} - \pi^{(k)}) = \rho^k-1$ and $c^{(k+1)}_j = \pi^{(k+1)}_j = \pi^{(k)}_j$.

The $(n+1)$th row sum of $L^{(k+1)}$  is given by
    \[-\rho^k(\rho^k-1) + (\rho^{k-1}+1)(\rho^{k+1}+1) - \rho(\rho^k-1) - (\sum(c^{(k+1)} - \pi^{(k+1)}))(c^{(k+1)}_{n+1} - \pi^{(k+1)}_{n+1}) - c^{(k+1)}_{n+1} = 0\,,
    \]
from $\sum(c^{(k+1)} - \pi^{(k+1)}) = \rho^{k+1} - 1$, $c^{(k+1)}_{n+1} = (1+\frac{1}{\rho^k})(\rho^{k-1}+1)$ and $\pi^{(k+1)}_{n+1} = \rho^{k-1}+1$.

Finally, for $n+2 \leq j \leq 2n+1$, the $j$th row sum of $L^{(k+1)}$ is given by
    \begin{align*}
        &\rho^2 (c_{j-n-1}^{(k)} + (\rho^k-1)(c_{j-n-1}^{(k)} - \pi^{(k)}_{j-n-1})) - \rho \pi^{(k)}_{j-n-1}\\
        &\quad- (\sum(c^{(k+1)} - \pi^{(k+1)}))(c^{(k+1)}_j - \pi^{(k+1)}_j) - c^{(k+1)}_j = 0\,,
    \end{align*}
from $c^{(k+1)}_j = \rho c^{(k)}_{j-n-1} - (\rho - 1 - \frac{1}{\rho^k})\pi^{(k)}_{j-n-1}$.

\section{Verification of the multi-step descent identity}\label{app:verification}

Here we prove item (iii) of Theorem~\ref{thm:maincert}. The base case $k=1$ is in Appendix \ref{appendix:basecase}; we then prove the identity for larger $k$ by induction. As described in the overview in \S\ref{ssec:verification}, the desired identity has both a linear and a quadratic component, and it suffices to check these separately. We begin by introducing helpful bookkeeping notation in Appendix \ref{appendix:decomp}, and then we check the linear component in Appendix \ref{appendix:linear} and the quadratic components in Appendices~\ref{appendix:quadsetup} and~\ref{appendix:quadproof}. All proofs begin with a straightforward inspection over the coefficients of the relevant terms after expanding the definition of the co-coercivities; for the convenience of the reader, we explicitly write these expanded forms in Proposition \ref{prop:decompdetail}. The proofs then check that the constant number of coefficients match in a conceptually straightforward (albeit tedious) manner.

Throughout this section, for notational shorthand we denote $c = c^{(k)}$ and $S = S^{(k)}$. In the few cases when needed, we explicitly write $c^{(k+1)}$ and $S^{(k+1)}$ to distinguish them respectively from $c$ and $S$.

\subsection{Base case}\label{appendix:basecase}

Here we prove the base case $k = 1$ for the multi-step descent identity~\eqref{eq:maincert}. For $k=1$,
    \[\lambda = \ \begin{bNiceArray}{cc}[first-row, first-col]
        & 0 & 1 \\
        0 & 0 & \rho \\
        1 & 1 & 0 \\
        * & \rho - 1 & \rho
    \end{bNiceArray}, \quad 
    \mu = \ \begin{bNiceArray}{c}[first-row, first-col]
        & 1 \\
        1 & 0 \\
        * & 2\rho - 1
    \end{bNiceArray} \,,
    \quad S = \begin{bmatrix}
        \frac{1}{\sqrt{2}} & 1 & -1 \\ 
        -1 & 2(\rho-1) & -2(\rho-1) \\
        1 & -2(\rho-1) & 2(\rho-1)
    \end{bmatrix}\,,
    \]
    $u = x_0 - x_* - (\rho-1)g_0 - \rho g_1 - 2(\rho-1)s_1 - s_*$, and $V = \begin{bmatrix}
        x_0 - x_* \ | \ s_1 \ | \ s_*
    \end{bmatrix}$. Note that $x_1 = x_0 - (\rho-1)(g_0 + s_1)$ and $g_* = -s_*$ by the definition of proximal GD. By plugging in these values and expanding the definition of the co-coercivities (Definition~\ref{def:coco}), the multi-step descent identity \eqref{eq:maincert} for $k=1$ amounts to the identity
    \begin{align*}
        &\rho(f_0 - f_1 - \angs{g_1}{x_0 - x_1}-\frac{1}{2}\norm{g_0 - g_1}^2) + (f_1 - f_0 - \angs{g_0}{x_1-x_0} - \frac{1}{2}\norm{g_1 - g_0}^2) \\
        &\quad+ (\rho-1)(f_* - f_0 - \angs{g_0}{x_* - x_0} - \frac{1}{2}\norm{g_* - g_0}^2) \\
        &\quad+ \rho(f_* - f_1 - \angs{g_1}{x_* - x_1} - \frac{1}{2}\norm{g_* - g_1}^2) + (2\rho-1)(h_* - h_1 - \angs{s_1}{x_* - x_1}) \\
        &= -(2\rho-1)(f_1 + h_1 - f_* - h_*) + \frac{\rho}{2\sqrt{2}}\norm{x_0 - x_*}^2 \\
        &\quad- \frac{1}{2}\|x_0 - x_* - (\rho-1)g_0 - \rho g_1 - 2(\rho-1)s_1 - s_*\|^2 \\
        &\quad- \frac{1}{4\sqrt{2}}\norm{x_0 - x_*}^2 - \frac{1}{4\sqrt{2}}\norm{x_0 - x_* - 2(\rho-1)(s_1 - s_*)}^2\,.
    \end{align*}
    It is straightforward to check this identity by matching coefficients. For example, on the left hand side, the coefficient of $f_1$ is $-\rho + 1 - \rho = -(2\rho - 1)$ and the coefficient of $\angs{g_0}{s_1}$ is $(\rho-1)-(\rho-1)(2\rho-1) = -2(\rho-1)^2$, respectively matching the corresponding coefficients on the right hand side.

\subsection{Bookkeeping}\label{appendix:decomp}

We decompose terms in the multi-step descent identity based on the recursive definition of the multipliers. For the convenience of the reader, expanded expressions for these terms are provided in Proposition \ref{prop:decompdetail}.

\begin{definition}[Bookkeeping decomposition] For $\lambda^{(k+1)}$ as in Definition \ref{def:lambda} and $\mu^{(k+1)}$ as in Definition \ref{def:mu}, let
\begin{equation}\label{eq:TCdecomp}
\begin{split}
    R &:= \sum_{i, j} \blambda^{(k+1), \rec}_{i, j}Q_{ij}^f + \sum_{i, j} \bmu^{(k+1), \rec}_{i, j}Q_{ij}^h\,, \\
    T^f &:= \sum_{j} \ulambda_{j}^{(k+1)} Q_{*j}^f\,, \\
    T^h &:= \sum_{j} \umu_{j}^{(k+1)}Q_{*j}^h\,, \\
    C^f &:= \sum_{i, j} (\blambda^{(k+1), \sparse}_{i, j} + \blambda^{(k+1), \lr}_{i, j})Q_{ij}^f\,, \\
    C^h &:= \sum_{i, j} (\bmu^{(k+1), \sparse}_{i, j} + \bmu^{(k+1), \lr}_{i, j})Q_{ij}^h\,. 
\end{split}
\end{equation}
\end{definition}

The induction step from $k$ to $k+1$ that we prove can be formally stated as follows. The rest of the section is devoted to proving this identity, by certifying that the coefficients on both sides are identical. Combined with the base case (Appendix \ref{appendix:basecase}), this directly proves the multi-step identity (item (iii) of Theorem \ref{thm:maincert}).

\begin{theorem}\label{thm:wts} Let $R, T^f, T^h, C^f, C^h$ be as in \eqref{eq:TCdecomp}, and assume that \eqref{eq:maincert} holds for $k$. Then
\begin{equation}\label{eq:wts}
\begin{split}
    &R + T^f + T^h + C^f + C^h \\
    &\quad= (2\rho^{k+1} - 1) (F_* - F_{2n+1}) + \frac{\rho}{2\sqrt{2}}\norm{x_0 - x_*}^2 - \frac{1}{2}(\norm{\widehat{u}}^2 + \Tr(\widehat{V}S^{(k+1)}\widehat{V}^T))
\end{split}
\end{equation}
where $\widehat{u} := x_0 - x_* - \sum_{i=0}^{2n} \alpha_i g_i - \rho^{k+1} g_{2n+1} - \sum_{j=1}^{2n+1} c^{(k+1)}_j s_j - s_*$ and \\$\widehat{V} := \begin{bmatrix}
    x_0 - x_* \ | \ s_1 \ | \ \dots \ | \ s_{2n+1} \ | \ s_*
\end{bmatrix}$.
\end{theorem}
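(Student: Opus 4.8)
The goal here is to prove Theorem~\ref{thm:wts}, the induction step that concatenates two copies of the length-$n$ silver schedule (with the extra large step in the middle) and upgrades the multi-step descent certificate from parameter $k$ to $k+1$. Let me think about how I would organize this.

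The plan is to expand everything in terms of co-coercivities and then match coefficients, exploiting the recursive structure of $\lambda^{(k+1)}$, $\mu^{(k+1)}$, $L^{(k+1)}$, $c^{(k+1)}$, and $\pi^{(k+1)}$. The LHS is already decomposed as $R + T^f + T^h + C^f + C^h$. The key is that $R$ — the recursion component — is, by construction, essentially two shifted/scaled copies of the LHS of the identity~\eqref{eq:maincert} at level $k$: one copy on iterates $x_0,\dots,x_n$ (with the identification $x_* \mapsto$ some intermediate iterate, here $x_n$), and a second copy on iterates $x_{n+1},\dots,x_{2n+1}$ scaled by $\rho^2$. So the first step is to invoke the inductive hypothesis twice to rewrite $R$ as a sum of two "telescoped" expressions of the form $(2\rho^k-1)(F_{\text{mid}} - F_{\text{end}}) + \frac{\rho}{2\sqrt2}\|\cdot\|^2 - \frac12(\|u_{\text{sub}}\|^2 + \Tr(V_{\text{sub}}S^{(k)}V_{\text{sub}}^T))$, appropriately shifted. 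One subtlety: in the recursion, the first copy of the level-$k$ identity is applied with $x_*$ of the subproblem replaced by $x_n$, which is NOT a minimizer of the subproblem; but the inductive identity~\eqref{eq:maincert} is a polynomial identity in the iterates, gradients, subgradients, and function values, so it holds verbatim for any choice of the "$*$" point as long as we interpret $g_* + s_* = 0$ appropriately — here one must be careful that $x_n$ plays the role of the starting point of the second subproblem, and $g_n + s_{n+1}$ (not $g_n + s_n$) is what the proximal step uses. I would carefully set up this bookkeeping first, perhaps in a preliminary lemma identifying exactly which substitution instance of~\eqref{eq:maincert} is being applied for each of the two copies.

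Then the second step is to handle the correction terms $T^f, T^h, C^f, C^h$ together with the ``leftover'' from the two recursive copies, and show everything collapses. The linear part (coefficients of $f_i$ and $h_i$) is claimed to be easy: the $\{f_i\}$ coefficients depend only on $\lambda$ and match by the vanilla-GD analysis, while the $\{h_i\}$ coefficients come from telescoping $\sum \mu_{ij} Q_{ij}^h = \sum \mu_{ij}(h_i - h_j - \langle s_j, x_i-x_j\rangle)$; using $\sum_j \mu_{*,j}^{(k+1)} = \sum(c^{(k+1)} + e_1) = 2\rho^{k+1}-2+1 = 2\rho^{k+1}-1$ (via Lemma~\ref{lem:ck}(a)) one gets the correct coefficient $-(2\rho^{k+1}-1)$ on $h_{2n+1}$ and $+(2\rho^{k+1}-1)$ on $h_*$. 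So I'd dispatch the linear part quickly by citing Lemma~\ref{lem:ck}(a) and the GD analysis, deferring routine expansion to an appendix.

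The quadratic part is where the real work — and the main obstacle — lies. After subtracting off the linear terms, the identity~\eqref{eq:wts} reduces to an identity between quadratic forms in $(x_0 - x_*, g_0, \dots, g_{2n+1}, s_1, \dots, s_{2n+1}, s_*)$, and naively this is $\Theta(n^2)$ coefficients. The trick, as the overview says, is that if one groups terms cleverly the whole quadratic form becomes ``succinct'': a quadratic polynomial in $O(1)$ aggregate variables such as $x_0 - x_*$, the ``big'' gradient combination $\sum \alpha_i g_i + \rho^{k+1} g_{2n+1}$, the subgradient-weighted sum $\sum c^{(k+1)}_j s_j + s_*$, the middle gradient $g_n$, the middle subgradient $s_{n+1}$, and the two partial sums coming from the two recursive subproblems. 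Concretely, the $\frac12\|\widehat u\|^2$ term on the RHS contains precisely such a combination, and the $\Tr(\widehat V S^{(k+1)}\widehat V^T)$ term, via the recursive block structure of $L^{(k+1)}$ in Definition~\ref{def:laplacian} (two scaled copies of $\bar L^{(k)} + (c^{(k)}-\pi^{(k)})(c^{(k)}-\pi^{(k)})^T$, plus the bordering row/column with entries $-\rho^k(c^{(k)}-\pi^{(k)})$, $-\rho\pi^{(k)}$, and the rank-one subtraction $(c^{(k+1)}-\pi^{(k+1)})(c^{(k+1)}-\pi^{(k+1)})^T$), splits into two copies of $\Tr(V_{\text{sub}} S^{(k)} V_{\text{sub}}^T)$ plus a handful of explicit rank-one corrections involving $\pi^{(k)}, c^{(k)}$. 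The plan is therefore: (1) write out, using Proposition~\ref{prop:decompdetail}, the expanded quadratic forms of $R$ (via the two inductive instances), $T^f + T^h$, and $C^f + C^h$ on the LHS; (2) write out the expanded quadratic form of the RHS by expanding $\|\widehat u\|^2$ and $\Tr(\widehat V S^{(k+1)}\widehat V^T)$ and using the block-recursive form of $S^{(k+1)}$ to peel off the two inductive copies; (3) cancel the two inductive copies from both sides; (4) verify that the $O(1)$ remaining aggregate coefficients match, using repeatedly the helper identities $\sum\pi^{(k)} = \rho^k - 1$, $\sum c^{(k)} = 2(\rho^k-1)$, the recursions $\pi^{(k+1)} = [\pi^{(k)}, \rho^{k-1}+1, \pi^{(k)}]$ and $c^{(k+1)} = [\pi^{(k)}, (1+\rho^{-k})(\rho^{k-1}+1), \rho c^{(k)} - (\rho-1-\rho^{-k})\pi^{(k)}]$, and the relation $x_{t+1} = x_t - \alpha_t(g_t + s_{t+1})$ which lets one express each $x_i - x_j$ appearing in the co-coercivities as a linear combination of gradients and subgradients.

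The main obstacle I anticipate is step (4) of the quadratic part — keeping the bookkeeping straight through the many rank-one corrections. In particular, the low-rank corrections $\blambda^{(k+1),\lr}$ and $\bmu^{(k+1),\lr}$ interact with the cross terms $\langle g_i, s_j\rangle$ and $\langle g_i, g_j\rangle$ in a way that must exactly reproduce the difference between $(c^{(k+1)}-\pi^{(k+1)})(c^{(k+1)}-\pi^{(k+1)})^T$ at level $k+1$ and the two copies of the corresponding level-$k$ rank-one matrix, and the ``middle'' contributions (the sparse corrections at $(n, 2n+1)$, $(2n+1,n)$, $(n,n+1)$, $(n+1,n+2)$, $(2n+1,n+1)$ and the new large stepsize $\alpha_n = \rho^{k-1}+1$ and the middle $c$-value $(1+\rho^{-k})(\rho^{k-1}+1)$) must be tracked with care to see that they assemble into exactly the border row/column of $L^{(k+1)}$ and the appropriate entries of $\widehat u$. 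My strategy to tame this is to introduce, once and for all, shorthand for each aggregate variable (e.g.\ $\sigma_1 := \sum_{i=0}^{n-1}\alpha_i g_i$, $\sigma_2 := g_n$, $\sigma_3 := s_{n+1}$, $\tau := \sum_{j=1}^n c^{(k)}_j s_j$, etc.), express both sides purely in these, and then match. All of this is conceptually routine linear algebra given the recursive definitions, so in the body I would present the structure of the argument — the two invocations of the inductive hypothesis, the reduction to linear plus quadratic, the succinctness of the quadratic form — and relegate the explicit coefficient matching to Appendices~\ref{appendix:quadsetup} and~\ref{appendix:quadproof}, exactly as the excerpt indicates.
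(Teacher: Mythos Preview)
Your overall plan---invoke the level-$k$ identity twice inside $R$, dispatch the linear form by telescoping, and collapse the quadratic form to $O(1)$ aggregate variables before matching coefficients---is exactly the paper's approach. The aggregate variables you propose are essentially the $\gamma_1,\dots,\gamma_4$ and $\sigma_1,\dots,\sigma_5$ of Definition~\ref{def:qfvectors}, and the type-by-type coefficient matching is carried out in Propositions~\ref{prop:gg}--\ref{prop:terminal}.

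There is, however, one misconception in your setup that would send the bookkeeping down the wrong track. You write that the first copy of~\eqref{eq:maincert} is applied ``with the identification $x_* \mapsto x_n$.'' This is not what happens and would not work cleanly: the identity~\eqref{eq:maincert} uses $g_* + s_* = 0$, which fails if you substitute $x_n$ for $x_*$. The point is that the recursion component $R$ contains \emph{only} the $\blambda^{(k)}$ and $\bmu^{(k)}$ blocks---by construction the $*$-rows $\ulambda^{(k)}, \umu^{(k)}$ are excluded from $\blambda^{(k+1),\rec}, \bmu^{(k+1),\rec}$. So to evaluate each block of $R$ via the inductive hypothesis, one rearranges the level-$k$ identity as
\[
\sum_{i,j \neq *} \blambda^{(k)}_{ij} Q^f_{ij} + \sum_{i,j \neq *} \bmu^{(k)}_{ij} Q^h_{ij} \;=\; (\text{RHS of }\eqref{eq:maincert}) - \sum_j \ulambda^{(k)}_j Q^f_{*j} - \sum_j \umu^{(k)}_j Q^h_{*j},
\]
and applies this \emph{both} times with the \emph{same} global minimizer $x_*$: once to the trajectory $x_0,\dots,x_n$, and once (scaled by $\rho^2$) to the trajectory $x_{n+1},\dots,x_{2n+1}$ with initial point $x_{n+1}$. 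This is precisely how the expanded form of $R$ in Proposition~\ref{prop:decompdetail} arises, and it explains why $x_*$, $g_*$, $s_*$ appear throughout that expression. With this correction to the recursive step, the remainder of your plan goes through exactly as you describe.
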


It is convenient to adopt the notation of \citet{gsw24} to write $\dbsmall{\cdot}_f$ to denote terms corresponding to $\{f_i\}$ in a given expression. For example, $\dbsmall{Q_{01}^f}_f = f_0 - f_1$; $\dbsmall{\cdot}_h$ is similarly defined.

These definitions readily extend to the quadratic form. Due to the composite nature of our problem, there are multiple types of vectors we need to consider. We categorize them into four types: $\{g, s, i, o\}$, where each denotes\footnote{While $x_0 - x_*$ (type $i$) or $s_*$ (type $o$) are only single vectors and thus can be subsumed into other types, we treat them separately because the patterns they exhibit in the equations are different from (and simpler than) those for other types.}
\begin{align*}
    g&: g_0, g_1, \dots, g_{2n+1} \text{ (\underline{g}radients corresponding to $f$)}\,,\\
    s&: s_1, s_2, \dots, s_{2n+1} \text{ (\underline{s}ubgradients corresponding to $h$)}\,,\\
    i&: x_0 - x_* \text{ (\underline{i}nitial distance)}\,,\\
    o&: s_* = -g_* \text{ (gradient at \underline{o}ptimality)}\,.
\end{align*}
For $y, z \in \{g, s, i, o\}$ we use $\dbsmall{\cdot}_{y, z} = \dbsmall{\cdot}_{z, y}$ to denote terms corresponding to the inner products between vectors in type $y$ and $z$. For example, $\dbsmall{Q_{01}^f}_{g, s} = -\sqrt{2}\angs{g_1}{s_1}$. It is clear that $\dbsmall{\cdot}$ (with any subscript) is linear with respect to its input.

For each coefficient of interest, it is easy to see that only certain summands contribute. This observation is formalized as follows and is used throughout the rest of the proof when evaluating coefficients.

\begin{observation}\label{obs:contribclearout} The following values are 0:
\begin{gather*}
    \db{T^h + C^h}_f, \db{T^f + C^f}_h, \db{T^h + C^h}_{g, g}, \db{T^f + C^f}_{s, s}, \\
    \db{T^h}_{i, g}, \db{T^f}_{i, s}, \db{C^f + C^h}_{i, y}, \db{T^h + C^f + C^h}_{o, y}\,,
\end{gather*}
where $y \in \{g, s, i, o\}$.
\end{observation}

\subsection{Linear form}\label{appendix:linear}

Here we verify the linear form in the multi-step descent identity~\eqref{eq:wts}. 

\begin{proposition}[Linear form]\label{prop:linear} Consider the setting of Theorem \ref{thm:wts}. Then
\begin{align*}
    \db{R+ T^f + C^f}_f &= (2\rho^{k+1}-1)(f_* - f_{2n+1})\,, \\
    \db{R + T^h + C^h}_h &= (2\rho^{k+1}-1)(h_* - h_{2n+1})\,.
\end{align*}
\end{proposition}
\begin{proof} The first equality was proved in \citet[Theorem 5.2]{rppa}. For the second equality, expanding each term following the definition yields (see Proposition \ref{prop:decompdetail} for corresponding expressions)
\begin{align*}
        &\db{R + T^h + C^h}_h \\
        &= h_1 - h_n + \sum_{j=1}^n c_j(h_j - h_n) + \rho^2(h_{n+2} - h_{2n+1}) + \sum_{j=n+2}^{2n+1}\rho^2c_{j-n-1}(h_j - h_{2n+1}) \\
        &\quad+ h_* - h_1 + \sum_{j=1}^n \alpha_{j-1}(h_* - h_j)  + (1+\frac{1}{\rho^k})(\rho^{k-1}+1)(h_* - h_{n+1})  \\
        &\quad+ \sum_{j=n+2}^{2n+1}(\rho(c_{j-n-1}-\alpha_{j-1}) + (1+\frac{1}{\rho^k})\alpha_{j-1})(h_* - h_j)  + \rho^k(h_n - h_{n+1}) + \rho^2(h_{n+1}-h_{n+2}) \\
        &\quad+ (\rho-\frac{1}{\rho^k})(\rho^{k-1}+1)(h_{2n+1} - h_{n+1}) + \sum_{j=1}^n(c_j - \alpha_{j-1})(h_n - h_j)  \\
        &\quad+ \cancel{\sum_{j=1}^n \frac{\rho^k}{\rho^{k-1}+1}(c_j - \alpha_{j-1})(h_{n+1}-h_n)} + \sum_{j=n+2}^{2n+1} \rho \alpha_{j-1} (h_{n+1} - h_{2n+1}) \\
        &\quad+ \cancel{\sum_{j=n+2}^{2n+1} \frac{\rho^k}{\rho^{k-1}+1}\alpha_{j-1}(h_n - h_{n+1})} + \sum_{j=n+2}^{2n+1}((\rho+1)c_{j-n-1} + (\rho-1-\frac{1}{\rho^k})\alpha_{j-1})(h_{2n+1}-h_j)\,.
\end{align*}
The cancellation in the middle is due to $\sum_{j=1}^n c_j = 2(\rho^k - 1) = 2\sum_{j=1}^n \alpha_{j-1}$ (Lemmas \ref{lem:silver} and \ref{lem:ck}). By collecting the coefficients for each $h_j$, this is equal to
\begin{align*}
        &\sum_{j=1}^n(c_j - \alpha_{j-1} - (c_j - \alpha_{j-1}))h_j\\
        &\quad+ \sum_{j=n+2}^{2n+1}(\rho^2 c_{j-n-1} - \rho(c_{j-n-1}-\alpha_{j-1}) - (1+\frac{1}{\rho^k})\alpha_{j-1} - (\rho+1)c_{j-n-1}-(\rho -1-\frac{1}{\rho^k})\alpha_{j-1})h_j \\
        &\quad+ h_1 - h_1 +(-1 -2(\rho^k-1) + \rho^k + (\rho^k - 1))h_n \\
        &\quad+(-(1+\frac{1}{\rho^k})(\rho^{k-1}+1)-\rho^k+\rho^2-(\rho - \frac{1}{\rho^k})(\rho^{k-1}+1)+\rho(\rho^k - 1))h_{n+1} + (\rho^2 - \rho^2)h_{n+2} \\
        &\quad+ (-\rho^2 -2\rho^2(\rho^k-1)+(\rho-\frac{1}{\rho^k})(\rho^{k-1}+1)-\rho(\rho^k-1)+2(\rho+1)(\rho^k-1)+(\rho-1-\frac{1}{\rho^k})(\rho^k-1))h_{2n+1} \\
        &\quad+ (1 + 2(\rho^{k+1} - 1))h_* \\
        &= (2\rho^{k+1}-1)(h_* - h_{2n+1})\,,
    \end{align*}
    where the coefficient of $h_*$ is calculated using $\sum_{j=1}^n \alpha_{j-1} + \sum_{j=n+2}^{2n+1} (\rho(c_{j-n-1} - \alpha_{j-1}) + (1+\frac{1}{\rho^k})\alpha_{j-1}) = \sum c^{(k+1)} = 2(\rho^{k+1}-1)$.
\end{proof}

\subsection{Quadratic form: setup}\label{appendix:quadsetup}

As briefly mentioned in the overview in \S\ref{ssec:verification}, instead of checking the $\Theta(n^2)$ coefficients corresponding to all inner products between the $\Theta(n)$ variables $\{g_i\}, \{s_i\}$, and $x_0 - x_*$, we simplify the analysis by first observing that the quadratic form expressions in \eqref{eq:TCdecomp} can be expressed succinctly in a constant number of modified variables (obtained by taking linear combinations of the original variables). The upshot is that then we need to only compute and verify a constant number of coefficients. These modified variables are as follows.

\begin{definition}[Simple vectors for the quadratic form]\label{def:qfvectors} Define
\begin{align*}
    \gamma &= \begin{bmatrix} \gamma_1 \ | \ \gamma_2 \ | \ \gamma_3 \ | \ \gamma_4        
    \end{bmatrix} \\
    &:= \begin{bmatrix}
        \sum_{i=0}^{n-1} \alpha_i g_i \ | \ g_n \ | \ \sum_{i=n+1}^{2n} \alpha_i g_i \ | \ g_{2n+1}
    \end{bmatrix}\,, \\
    \sigma &= \begin{bmatrix}
        \sigma_1 \ | \ \sigma_2 \ | \ \sigma_3 \ | \ \sigma_4 \ | \ \sigma_5
    \end{bmatrix} \\
    &:= \begin{bmatrix}
        \sum_{j=1}^n \alpha_{j-1} s_j \ | \ \sum_{j=1}^n c_j s_j \ | \ s_{n+1} \ | \ \sum_{j=n+2}^{2n+1} \alpha_{j-1} s_j \ | \ \sum_{j=n+2}^{2n+1} c_{j-n-1} s_j
    \end{bmatrix} \,.
\end{align*}
From the recursive properties of $\pi^{(k)}$ and $c^{(k)}$ (Lemma \ref{lem:silver} and Definition \ref{def:ck}), one can observe that for
\begin{align*}
    w_g &:= [1, \rho^{k-1}+1, 1, \rho^{k+1}]\,, \\
    w_s &:= [1, 0, (1+\frac{1}{\rho^k})(\rho^{k-1}+1), -(\rho-1-\frac{1}{\rho^k}), \rho]\,,
\end{align*}
we have
\begin{equation*}
\begin{aligned}
    \gamma w_g &= \sum_{i=0}^{2n}\alpha_i g_i + \rho^{k+1}g_{2n+1}\,, \\
    \sigma w_s &= \sum_{j=1}^{2n+1} c^{(k+1)}_j s_j\,.
\end{aligned}    
\end{equation*}
\end{definition}

These vectors naturally arise from the quadratic forms in \eqref{eq:maincert} and \eqref{eq:wts}, which can be concisely represented by them; in particular, 
\begin{align*}
    \frac{1}{2}\norm{u}^2 &= \frac{1}{2}\norm{x_0 - x_* - \gamma_1 - \rho^k \gamma_2 - \sigma_2 - s_*}^2\,, \\
    \frac{1}{2}\norm{\widehat{u}}^2 &= \frac{1}{2}\norm{x_0 - x_* - \gamma w_g - \sigma w_s - s_*}^2\,,
\end{align*}
and the terms $\frac{1}{2}\Tr(VSV^T)$ and $\frac{1}{2}\Tr(\widehat{V}S^{(k+1)}\widehat{V}^T)$ together can be expressed similarly by $\sigma$ modulo simple operations from recursion. 

While some expressions in \eqref{eq:TCdecomp} can be represented with $\gamma$ or $\sigma$ in a straightforward way, in some cases the terms should be properly combined with others, for which we provide the details. First, we identify particular terms that are sums of inner products. These are categorized as $A_1, A_2, A_3$ in Proposition \ref{prop:decompdetail}, and with a slight abuse of notation we use $A_i$ for the sum of the respective entries as well. The following equalities can be verified by inspection.

\begin{observation}\label{obs:combineforsimpleA} For $A_1, A_2, A_3$ in Proposition \ref{prop:decompdetail} and $\gamma, \sigma$ as in Definition \ref{def:qfvectors},
\begin{align*}
    \db{A_1}_{g, s} &= \angs{\gamma_1}{\sigma_2 - \sigma_1} \,, \\
    \db{A_2}_{g, s} &= \rho \angs{\gamma_3}{2\sigma_1 + (\rho^{k-1}+1)\sigma_3 + \sigma_4} \,, \\
    \db{A_3}_{g, s} &= \angs{\gamma_1 + (\rho^{k-1}+1)\gamma_2 + \gamma_3}{(\rho-1-\frac{1}{\rho^k})\sigma_4 + (\rho+1)\sigma_5} \,,
\end{align*}
and
\begin{align*}
    \db{A_1}_{s, s} &= \angs{\sigma_1}{\sigma_2 - \sigma_1} \,, \\
    \db{A_2}_{s, s} &= 0 \,, \\
    \db{A_3}_{s, s} &= \angs{\sigma_1 + (\rho^{k-1}+1)\sigma_3 + \sigma_4}{(\rho-1-\frac{1}{\rho^k})\sigma_4 + (\rho+1)\sigma_5}\,.
\end{align*}
\end{observation}

Next, we identify other terms ($B_1, B_2, B_3$ in Proposition \ref{prop:decompdetail}, with similar abuse of notation) which are related to specific (sub)gradient vectors; namely, $g_n = \gamma_2, g_{2n+1} = \gamma_4$, and $s_{n+2}$. These can also be verified by inspection and by using the Pell recurrence $\rho^2 = 2\rho + 1$ for the silver ratio $\rho$.
\begin{observation}\label{obs:combineforsimpleB} For $B_1, B_2, B_3$ in Proposition \ref{prop:decompdetail} and $\gamma, \sigma$ as in Definition \ref{def:qfvectors},
\begin{align*}
    \db{B_1}_{g, s} &= \rho^k\angs{\gamma_2}{\sigma_1 + (\rho^{k-1}+1)\sigma_3 + \sigma_4} \,, \\
    \db{B_2}_{g, s} &= \rho^2\angs{\gamma_1 + (\rho^{k-1}+1)\gamma_2}{s_{n+2}} \,, \\
    \db{B_3}_{g, s} &= \rho^k(\rho+1)\angs{\gamma_4}{\sigma_1 + (\rho^{k-1}+1)\sigma_3 + \sigma_4} \,,
\end{align*}
and
\begin{align*}
    \db{B_1}_{s, s} &= 0 \,, \\
    \db{B_2}_{s, s} &= \rho^2\angs{\sigma_1 + (\rho^{k-1}+1)\sigma_3}{s_{n+2}} \,, \\
    \db{B_3}_{s, s} &= 0 \,.
\end{align*}
\end{observation}

\subsection{Quadratic form: verification}\label{appendix:quadproof}
After the preprocessing steps in Appendix \ref{appendix:quadsetup}, we are ready to prove that the coefficients of the quadratic form match in \eqref{eq:wts}. The proof is divided into multiple parts based on the combination of $y, z \in \{g, s, i, o\}$.

First, we verify the terms involving $\angs{g_i}{g_j}$ in the multi-step descent identity. This is identical to (and thus follows from) the corresponding analysis of vanilla GD.

\begin{proposition}[Quadratic form in $(g, g)$; {\citet[Theorem 5.2]{rppa}}]\label{prop:gg} Consider the setting of Theorem \ref{thm:wts} and let $\gamma, w_g$ be as in Definition \ref{def:qfvectors}. Then
\begin{align*}
    \db{R + T^f + C^f}_{g, g} &= -\frac{1}{2}\norm{\gamma w_s}^2\,.
\end{align*}
\end{proposition}

The next two propositions respectively verify the terms involving $\angs{g_i}{s_j}$ and $\angs{s_i}{s_j}$. 

\begin{proposition}[Quadratic form in $(g, s)$]\label{prop:gs} Consider the setting of Theorem \ref{thm:wts} and let $\gamma, w_g, \sigma, w_s$ be as in Definition \ref{def:qfvectors}. Then
\begin{align*}
    \db{R + T^f + T^h + C^f + C^h}_{g, s} &= -\angs{\gamma w_g}{\sigma w_s}\,.
\end{align*}
\end{proposition}
\begin{proof}
    Using the representation with respect to $\gamma$ and $\sigma$ given from Observation \ref{obs:combineforsimpleA} and Observation \ref{obs:combineforsimpleB},
    \begin{align*}
        &\db{R + T^f + T^h + C^f + C^h}_{g, s} \\
        &= \db{A_1 + A_2 + A_3 + B_1 + B_2 + B_3}_{g, s} + \db{\text{(remaining terms in \eqref{eq:TCdecomp})}}_{g, s} \\
        &= \angs{\gamma_1}{\sigma_2 - \sigma_1} + \rho \angs{\gamma_3}{2\sigma_1 + (\rho^{k-1}+1)\sigma_3 + \sigma_4} \\
        &\quad+ \angs{\gamma_1 + (\rho^{k-1}+1)\gamma_2 + \gamma_3}{(\rho-1-\frac{1}{\rho^k})\sigma_4 + (\rho+1)\sigma_5} + \rho^k\angs{\gamma_2}{\sigma_1 + (\rho^{k-1}+1)\sigma_3 + \sigma_4} \\
        &\quad + \cancel{\rho^2\angs{\gamma_1 + (\rho^{k-1}+1)\gamma_2}{s_{n+2}}} + \rho^k(\rho+1)\angs{\gamma_4}{\sigma_1 + (\rho^{k-1}+1)\sigma_3 + \sigma_4} - \angs{\gamma_1 + \rho^k \gamma_2}{\sigma_2} \\
        &\quad  - \rho^2\angs{\gamma_1 + (\rho^{k-1}+1)\gamma_2}{\sigma_5} - \rho^2\angs{\gamma_3 + \rho^k \gamma_4}{\sigma_1 + (\rho^{k-1} + 1)\sigma_3} - \rho^2\angs{\gamma_3 + \rho^k \gamma_4}{\gamma_5} \\
        &\quad- \cancel{\rho^2\angs{\gamma_1 + (\rho^{k-1} + 1)\gamma_2}{s_{n+2}}} - (\rho^{k-1}+1)\angs{\gamma_2}{\sigma_1} \\
        &\quad- (1+\frac{1}{\rho^k})(\rho^{k-1}+1)\angs{\gamma_1 + (\rho^{k-1}+1)\gamma_2}{\sigma_3} - \rho\angs{\gamma_4}{(\rho^{k-1}+1)\sigma_3 + \sigma_4} \\
        &\quad- \rho^k(\rho^{k-1}+1)\angs{\gamma_2}{\sigma_3} + (\rho-\frac{1}{\rho^k})(\rho^{k-1}+1)\angs{\gamma_3}{\sigma_3} + \rho^k\angs{\gamma_2}{\sigma_2 - \sigma_1}- \rho\angs{\gamma_3}{\sigma_4} \\
        &\quad- \rho^k\angs{\gamma_2}{\sigma_4}\,.
    \end{align*}
    This is equal to $-\angs{\gamma w_g}{\sigma w_s}$. This can be checked by hand; for brevity, we provide a simple Mathematica script that rigorously verifies these identities in the supplementary file~\citep{MathematicaURL}.
\end{proof}
\begin{proposition}[Quadratic form in $(s, s)$]\label{prop:ss} Consider the setting of Theorem \ref{thm:wts} and let $\sigma, w_s$ be as in Definition \ref{def:qfvectors}. Then
\[\db{R + T^h + C^h}_{s, s} = -\frac{1}{2}\norm{\sigma w_s}^2 - \db{\frac{1}{2}\Tr(\widehat{V} S^{(k+1)} \widehat{V}^T)}_{s, s}\,,
\]
where $S^{(k+1)}$ is as in Definition \ref{def:laplacian} and $\widehat{V} = \begin{bmatrix}
        x_0 - x_* \ | \ s_1 \ | \ \dots \ | \ s_{2n+1} \ | \ s_*
    \end{bmatrix}$.
\end{proposition}
\begin{proof} With the expressions from Observations \ref{obs:combineforsimpleA} and \ref{obs:combineforsimpleB},
\begin{align*}
    &\db{R + T^h + C^h + \frac{1}{2}\Tr(\widehat{V} S^{(k+1)} \widehat{V}^T)}_{s, s} \\
    &= \db{A_1 + A_2 + A_3 + B_1 + B_2 + B_3}_{s, s} + \db{\text{(remaining terms in \eqref{eq:TCdecomp})}}_{s, s} + \db{\frac{1}{2}\Tr(\widehat{V} S^{(k+1)} \widehat{V}^T)}_{s, s} \\
    &= \angs{\sigma_1}{\sigma_2 - \sigma_1} + \angs{\sigma_1 + (\rho^{k-1}+1)\sigma_3 + \sigma_4}{(\rho-1-\frac{1}{\rho^k})\sigma_4 + (\rho+1)\sigma_5} \\
    &\quad+ \rho^2\angs{\sigma_1 + (\rho^{k-1}+1)\sigma_3}{s_{n+2}} - \frac{1}{2}\norm{\sigma_2}^2 -\db{\frac{1}{2}\Tr(V S V^T)}_{s, s} \\
    &\quad -\rho^2 \angs{\sigma_1 + (\rho^{k-1}+1)\sigma_3}{\sigma_5} - \frac{\rho^2}{2}\norm{\sigma_5}^2 - \db{\frac{\rho^2}{2}\Tr(\widetilde{V} S \widetilde{V}^T)}_{s, s} \\
    &\quad- (1+\frac{1}{\rho^k})(\rho^{k-1}+1)\angs{\sigma_3}{\sigma_1 + (1+\rho^{k-1})\sigma_3} - \rho^k(\rho^{k-1}+1)\norm{\sigma_3}^2\\
    &\quad+ (\rho-\frac{1}{\rho^k})(\rho^{k-1}+1)\angs{\sigma_3}{\sigma_4} + \rho^k\angs{\sigma_3}{\sigma_2 - \sigma_1} - \rho\norm{\sigma_4}^2 - \rho^k\angs{\sigma_3}{\sigma_4} \\
    &\quad+ \db{\frac{1}{2}\Tr(\widehat{V} S^{(k+1)} \widehat{V}^T)}_{s, s} \,,
\end{align*}
where $V = \begin{bmatrix}
        x_0 - x_* \ | \ s_1 \ | \ \dots \ | \ s_n \ | \ s_*
    \end{bmatrix}$ and $\widetilde{V} = \begin{bmatrix}
        x_{n+1} - x_* \ | \ s_{n+2} \ | \ \dots \ | \ s_{2n+1} \ | \ s_*
    \end{bmatrix}$. Collecting the terms that involve vectors other than those from $\sigma$, by the recursive construction of $S^{(k+1)}$ (Definition \ref{def:laplacian}) we have
\begin{align*}
    &\rho^2\angs{\sigma_1 + (\rho^{k-1}+1)\sigma_3}{s_{n+2}} -\db{\frac{1}{2}\Tr(V S V^T)}_{s, s} - \db{\frac{\rho^2}{2}\Tr(\widetilde{V} S \widetilde{V}^T)}_{s, s} + \db{\frac{1}{2}\Tr(\widehat{V} S^{(k+1)} \widehat{V}^T)}_{s, s} \\
    &= \cancel{\rho^2\angs{\sigma_1 + (\rho^{k-1}+1)\sigma_3}{s_{n+2}}} + \frac{1}{2}\norm{\sigma_2 - \sigma_1}^2 + \frac{\rho^2}{2}\norm{\sigma_5 - \sigma_4}^2 - \cancel{\rho^2\angs{\sigma_1 + (\rho^{k-1}+1)\sigma_3}{s_{n+2}}} \\
    &\quad+ \frac{1}{2}(\rho^{k-1}+1)(\rho^{k+1}+1)\norm{\sigma_3}^2 - \rho^k\angs{\sigma_3}{\sigma_2 - \sigma_1} - \rho\angs{\sigma_3}{\sigma_4} \\
    &\quad- \frac{1}{2}\norm{\frac{\rho^{k-1}+1}{\rho^k}\sigma_3-(\rho - \frac{1}{\rho^k})\sigma_4 + \rho \sigma_5}^2\,,
\end{align*}
where the last term is from $c^{(k+1)} - \pi^{(k+1)} = [\underbrace{0, \dots, 0}_{n}, \frac{\rho^{k-1}+1}{\rho^k}, \rho c^{(k)} - (\rho-\frac{1}{\rho^k})\pi^{(k)}]$. Thus, it suffices to show that
\begin{align*}
    &\angs{\sigma_1}{\sigma_2 - \sigma_1} + \angs{\sigma_1 + (\rho^{k-1}+1)\sigma_3 + \sigma_4}{(\rho-1-\frac{1}{\rho^k})\sigma_4 + (\rho+1)\sigma_5} -\frac{1}{2}\norm{\sigma_2}^2 \\
    &\quad- \rho^2 \angs{\sigma_1 + (\rho^{k-1}+1)\sigma_3}{\sigma_5} -\frac{\rho^2}{2}\norm{\sigma_5}^2 -(1+\frac{1}{\rho^k})(\rho^{k-1}+1)\angs{\sigma_3}{\sigma_1 + (\rho^{k-1}+1)\sigma_3} \\
    &\quad -\rho^k(\rho^{k-1}+1)\norm{\sigma_3}^2+ (\rho-\frac{1}{\rho^k})(\rho^{k-1}+1)\angs{\sigma_3}{\sigma_4} + \rho^k\angs{\sigma_3}{\sigma_2 - \sigma_1} - \rho\norm{\sigma_4}^2 \\
    &\quad- \rho^k\angs{\sigma_3}{\sigma_4} + \frac{1}{2}\norm{\sigma_2 - \sigma_1}^2 + \frac{\rho^2}{2}\norm{\sigma_5 - \sigma_4}^2 + \frac{1}{2}(\rho^{k-1}+1)(\rho^{k+1}+1)\norm{\sigma_3}^2 \\
    &\quad- \rho^k\angs{\sigma_3}{\sigma_2 - \sigma_1} - \rho\angs{\sigma_3}{\sigma_4} - \frac{1}{2}\norm{\frac{\rho^{k-1}+1}{\rho^k}\sigma_3-(\rho - \frac{1}{\rho^k})\sigma_4 + \rho \sigma_5}^2
\end{align*}
is equal to $-\frac{1}{2}\norm{\sigma w_s}^2$. 
This can be checked by hand; for brevity, we provide a simple Mathematica script that rigorously verifies these identities in the supplementary file~\citep{MathematicaURL}.
\end{proof}
Finally, we verify the terms involving $x_0 - x_*$ and $s_*$.

\begin{proposition}[Quadratic form involving $x_0 - x_*$]\label{prop:init} Consider the setting of Theorem \ref{thm:wts} and let $\gamma, w_g, \sigma, w_s$ be as in Definition \ref{def:qfvectors}. Then
\begin{align*}
    \db{R + T^f}_{i, g} &= \angs{x_0 - x_*}{\gamma w_g}\,,\\
    \db{R + T^h}_{i, s} &= \angs{x_0 - x_*}{s_1} + \angs{x_0 - x_*}{\sigma w_s}\,,\\
    \db{R + T^f + T^h}_{i, i} &= 0\,.
\end{align*}
\end{proposition}
\begin{proof} The last equality $\db{R + T^f + T^h}_{i, i} = 0$ is clear from inspection. For other cases,
\begin{align*}
    \db{R + T^f}_{i, g} &= \sum_{i=0}^{n-1}(-\alpha_i + \alpha_i + \alpha_i)\angs{x_0 - x_*}{g_i} + \sum_{i=0}^{n-1}(-\rho^k + \rho^k + (\rho^{k-1}+1))\angs{x_0 - x_*}{g_n} \\
    &\quad+ \sum_{i=n+1}^{2n}(-\rho^2 \alpha_i + \rho^2 \alpha_i + \alpha_i)\angs{x_0 - x_*}{g_i} \\
    &\quad+ (-\rho^{k+2}+\rho^{k+2} + \rho^{k+1})\angs{x_0 - x_*}{g_{2n+1}} \\
    &= \angs{x_0 - x_*}{\gamma w_g}\,, \\
    \db{R + T^h}_{i, s} &= \angs{x_0 - x_*}{s_1} + \sum_{j=1}^n (-c_j + c_j + \alpha_{j-1})\angs{x_0 - x_*}{s_j} \\
    &\quad + (1+\frac{1}{\rho^k})(\rho^{k-1}+1)\angs{x_0 - x_*}{s_{n+1}} - \rho^2 \angs{x_0 - x_*}{s_{n+2}} + \rho^2 \angs{x_0 - x_*}{s_{n+2}} \\
    &\quad+ \sum_{j=n+2}^{2n+1}(- \rho^2 c_{j-n-1} + \rho^2 c_{j-n-1} + \rho(c_{j-n-1} - \alpha_{j-1}) + (1+\frac{1}{\rho^k})\alpha_{j-1})\angs{x_0 - x_*}{s_j} \\
    &= \angs{x_0 - x_*}{s_1} + \angs{x_0 - x_*}{\sigma w_s}\,.
\end{align*}
\end{proof}

\begin{proposition}[Quadratic form involving $s_*$]\label{prop:terminal}  Consider the setting of Theorem \ref{thm:wts} and let $\gamma, w_g,$ be as in Definition \ref{def:qfvectors}. Then
\begin{align*}
    \db{R + T^f}_{o, g} &= -\angs{s_*}{\gamma w_g}\,,\\
    \db{R + T^f}_{o, s} &= 0\,, \\
    \db{R + T^f}_{o, i} &= 0\,, \\
    \db{R + T^f}_{o, o} &= -\frac{1}{2}(2\rho^{k+1}-1)\norm{s_*}^2\,.
\end{align*}
\end{proposition}
\begin{proof} The second and third equalities $\db{R + T^f}_{o, s} = \db{R + T^f}_{o, i} = 0$ are clear from inspection. For other cases,
\begin{align*}
    &\db{R + T^f}_{o, g} \\
    &= \sum_{i=0}^{n-1}(\alpha_i - \alpha_i - \alpha_i)\angs{s_*}{g_i} + (\rho^k - \rho^k - (\rho^{k-1}+1))\angs{s_*}{g_n} \\
    &\quad + \sum_{i=n+1}^{2n}(\rho^2 \alpha_i - \rho^2 \alpha_i - \alpha_i)\angs{s_*}{g_i} + (\rho^{k+2} - \rho^{k+2} - \rho^{k+1})\angs{s_*}{g_{2n+1}} \\
    &= -\angs{\gamma w_g}{s_*}\,,\\
    &\db{R + T^f}_{o, o} \\
    &= \left(\sum_{i=0}^{n-1} \frac{\alpha_i}{2} + \frac{\rho^k}{2} - \frac{1}{2} + \sum_{i=n+1}^{2n} \frac{\rho^2 \alpha_i}{2} + \frac{\rho^{k+2}}{2} - \frac{\rho^2}{2} - \sum_{i=0}^{n-1} \frac{\alpha_i}{2} - \frac{\rho^{k-1}+1}{2} - \sum_{i=0}^{n-1} \frac{\alpha_i}{2} - \frac{\rho^{k+1}}{2}\right)\norm{s_*}^2 \\
    &= -\frac{1}{2}(2\rho^{k+1}-1)\norm{s_*}^2\,.
\end{align*}    
\end{proof}

Combining these results proves Theorem \ref{thm:wts}.
\begin{proof}[Proof of Theorem \ref{thm:wts}] 
The propositions in Appendices \ref{appendix:linear} and \ref{appendix:quadproof} show that the left and right hand side of~\eqref{eq:wts} match.
\end{proof}

\section{Expanded expression for the decomposition}

For the reader's convenience, here we provide a fully expanded expression for the terms in \eqref{eq:TCdecomp}. This follows by expanding the definition of co-coercivities (Definition \ref{def:coco}) and using the induction hypothesis \eqref{eq:maincert}.

\begin{proposition}[Expanded expressions for \eqref{eq:TCdecomp}]\label{prop:decompdetail} Let $R, T^f, T^h, C^f, C^h$ be as in \eqref{eq:TCdecomp}, and $u, V, S$ be as in Theorem \ref{thm:maincert}. Also, assume that $\eqref{eq:maincert}$ is true for $k$. Then with
\begin{align*}
    \widetilde{u} &:= x_{n+1} - x_* - \sum_{i=n+1}^{2n} \alpha_i g_i - \rho^k g_{2n+1} - \sum_{j=n+2}^{2n+1} c_{j-n-1} s_j - s_*\,, \\
    \widetilde{V} &:= \begin{bmatrix}
        x_{n+1} - x_* \ | \ s_{n+2} \ | \ \dots \ | \ s_{2n+1} \ | \ s_*
    \end{bmatrix}\,,
\end{align*}
the explicit expressions for the terms in the decomposition are given as follows:

\begingroup
\allowdisplaybreaks
\begin{align*}
    R &= \underbrace{\sum_{i=0}^{n-1}\alpha_i (f_i - f_n + \angs{g_i}{x_* - x_i} + \frac{1}{2}\norm{g_* - g_i}^2)}_{A_1} + \underbrace{\rho^k \angs{g_n}{x_* - x_n}}_{B_1} + \frac{\rho^k}{2}\norm{g_* - g_n}^2 \\
    &\quad+ \underbrace{h_1 - h_n + \angs{s_1}{x_* - x_1}}_{B_2} + \underbrace{\sum_{j=1}^n c_j(h_j - h_n + \angs{s_j}{x_* - x_j})}_{A_1} + \frac{1}{2}(1+\frac{1}{\sqrt{2}})\norm{x_0 - x_*}^2 \\
    &\quad- \frac{1}{2}(\norm{u}^2 + \Tr(VSV^T)) + \underbrace{\sum_{i=n+1}^{2n}\rho^2\alpha_i(f_i - f_{2n+1} + \angs{g_i}{x_* - x_i} + \frac{1}{2}\norm{g_* - g_i}^2)}_{A_2}  \\
    &\quad+ \underbrace{\rho^{k+2}\angs{g_{2n+1}}{x_* - x_{2n+1}}}_{B_3} + \frac{\rho^{k+2}}{2}\norm{g_* - g_{2n+1}}^2 \\
    &\quad+ \underbrace{\rho^2(h_{n+2}-h_{2n+1} + \angs{s_{n+2}}{x_* - x_{n+2}})}_{B_2} + \underbrace{\sum_{j=n+2}^{2n+1}\rho^2 c_{j-n-1}(h_j - h_{2n+1} + \angs{s_j}{x_* - x_j})}_{A_3} \\
    &\quad+ \frac{\rho^2}{2}(1+\frac{1}{\sqrt{2}})\norm{x_{n+1} - x_*}^2 - \frac{\rho^2}{2}(\norm{\widetilde{u}}^2 + \Tr(\widetilde{V} S \widetilde{V}^T)) \,, \\
    T^f &= \underbrace{\sum_{i=0}^{n-1}\alpha_i (f_* - f_i - \angs{g_i}{x_* - x_i} - \frac{1}{2}\norm{g_* - g_i}^2)}_{A_1} \\
    &\quad+ (\rho^{k-1}+1)(f_* - f_n - \angs{g_n}{x_* - x_n} - \frac{1}{2}\norm{g_* - g_n}^2) \\
    &\quad+ \underbrace{\sum_{i=n+1}^{2n}\alpha_i (f_* - f_i - \angs{g_i}{x_* - x_i} - \frac{1}{2}\norm{g_* - g_i}^2)}_{A_2} \\
    &\quad+ \underbrace{\rho^{k+1}(f_* - f_{2n+1} - \angs{g_{2n+1}}{x_* - x_{2n+1}} - \frac{1}{2}\norm{g_* - g_{2n+1}}^2)}_{B_3}\,, \\
    T^h &= \underbrace{h_* - h_1 - \angs{s_1}{x_* - x_1}}_{B_1} + \sum_{j=1}^n \alpha_{j-1}(h_* - h_j - \angs{s_j}{x_* - x_j}) \\
    &\quad+ (1+\frac{1}{\rho^k})(\rho^{k-1} + 1)(h_* - h_{n+1} - \angs{s_{n+1}}{x_* - x_{n+1}})\\
    &\quad+ \underbrace{\sum_{j=n+2}^{2n+1}(\rho(c_{j-n-1} - \alpha_{j-1}) + (1 + \frac{1}{\rho^k})\alpha_{j-1})(h_* - h_j - \angs{s_j}{x_* - x_j})}_{A_3}\,, \\
    C^f &= \underbrace{\sum_{i=n+1}^{2n}\rho \alpha_i(f_n - f_i - \angs{g_i}{x_n - x_i} - \frac{1}{2}\norm{g_n - g_i}^2)}_{A_2} \\
    &\quad+ \rho(f_n - f_{2n+1} - \angs{g_{2n+1}}{x_n - x_{2n+1}} - \frac{1}{2}\norm{g_n - g_{2n+1}}^2) \\
    &\quad+ \sum_{i=n+1}^{2n}\rho\alpha_i(f_{2n+1} - f_i - \angs{g_i}{x_{2n+1} - x_i} - \frac{1}{2}\norm{g_{2n+1} - g_i}^2) \\
    &\quad+ \underbrace{\rho^k (f_{2n+1} - f_n - \angs{g_n}{x_{2n+1} - x_n} - \frac{1}{2}\norm{g_n - g_{2n+1}}^2)}_{B_1}\,,
\end{align*}
\endgroup
and
\begin{align*}
    C^h &= \rho^k(h_n - h_{n+1} - \angs{s_{n+1}}{x_n - x_{n+1}}) + \underbrace{\rho^2(h_{n+1} - h_{n+2} - \angs{s_{n+2}}{x_{n+1} - x_{n+2}})}_{B_2} \\
    &\quad+ (\rho - \frac{1}{\rho^k})(\rho^{k-1} + 1)(h_{2n+1}-h_{n+1} - \angs{s_{n+1}}{x_{2n+1} - x_{n+1}}) \\
    &\quad+ \underbrace{\sum_{j=1}^n(c_j - \alpha_{j-1})(h_n - h_j - \angs{s_j}{x_n - x_j})}_{A_1} \\
    &\quad+ \sum_{j=1}^n \frac{\rho^k}{\rho^{k-1}+1}(c_j-\alpha_{j-1})(h_{n+1}-h_n - \angs{s_j}{x_{n+1}-x_n})\\
    &\quad+ \sum_{j=n+2}^{2n+1}\rho \alpha_{j-1}(h_{n+1} - h_{2n+1} - \angs{s_j}{x_{n+1} - x_{2n+1}}) \\
    &\quad+ \sum_{j=n+2}^{2n+1}\frac{\rho^k}{\rho^{k-1}+1}\alpha_{j-1}(h_n - h_{n+1}-\angs{s_j}{x_n-x_{n+1}})\\
    &\quad \underbrace{\sum_{j=n+2}^{2n+1}((\rho+1)c_{j-n-1} + (\rho-1-\frac{1}{\rho^k})\alpha_{j-1})(h_{2n+1} - h_j - \angs{s_j}{x_{2n+1} - x_j})}_{A_3}\,.
\end{align*}
\end{proposition}
\begin{proof}
    The equalities for $T^f, T^h, C^f$ are obtained directly from expanding the co-coercivities. The expression for $C^h$ can be obtained after combining terms sequentially; in particular,
    \begin{align*}
        &\sum_{j=1}^n \bmu_{n, j}^{(k+1), \lr}Q_{nj}^h + \sum_{j=1}^n \bmu_{n+1, j}^{(k+1), \lr}Q_{(n+1)j}^h \\
        &=\sum_{j=1}^n (1-\frac{\rho^k}{\rho^{k-1}+1})(c_j - \alpha_{j-1})(h_n - h_j - \angs{s_j}{x_n - x_j}) \\
        &\quad+ \sum_{j=1}^n \frac{\rho^k}{\rho^{k-1}+1}(c_j - \alpha_{j-1})(h_{n+1}-h_j - \angs{s_j}{x_{n+1}-x_j})\\
        &= \sum_{j=1}^n (c_j - \alpha_{j-1})(h_n - h_j - \angs{s_j}{x_n - x_j}) \\
        &\quad+ \sum_{j=1}^n \frac{\rho^k}{\rho^{k-1}+1}(c_j - \alpha_{j-1})(h_{n+1}-h_n - \angs{s_j}{x_{n+1} - x_n})\,, \\
        &\sum_{j=n+2}^{2n+1} \bmu_{n, j}^{(k+1), \lr}Q_{nj}^h + \sum_{j=n+2}^{2n+1} \bmu_{n+1, j}^{(k+1), \lr}Q_{(n+1)j}^h \\
        &=\sum_{j=n+2}^{2n+1} \frac{\rho^k}{\rho^{k-1} + 1}\alpha_{j-1}(h_n - h_j - \angs{s_j}{x_n - x_j}) \\
        &\quad+ \sum_{j=n+2}^{2n+1} \frac{\rho}{\rho^{k-1} + 1}\alpha_{j-1}(h_{n+1} - h_j - \angs{s_j}{x_{n+1} - x_j}) \\
        &= \underbrace{\sum_{j=n+2}^{2n+1} \rho \alpha_{j-1}(h_{n+1} - h_j - \angs{s_j}{x_{n+1} - x_j})}_{=:\Delta} \\
        &\quad+ \sum_{j=n+2}^{2n+1} \frac{\rho^k}{\rho^{k-1} + 1} \alpha_{j-1}(h_n - h_{n+1} - \angs{s_j}{x_n - x_{n+1}})\,,
    \end{align*}
    and
    \begin{align*}
        &\Delta + \sum_{j = n+2}^{2n+1}\bmu_{2n+1, j}^{(k+1), \lr}Q_{(2n+1)j}^h \\
        &= \Delta + \sum_{j=n+2}^{2n+1}((\rho+1)c_{j-n-1} - (1+\frac{1}{\rho^k})\alpha_{j-1})(h_{2n+1} - h_j - \angs{s_j}{x_{2n+1}-x_j}) \\
        &= \sum_{j=n+2}^{2n+1} \rho \alpha_{j-1} (h_{n+1} - h_{2n+1} - \angs{s_j}{x_{n+1} - x_{2n+1}})\\
        &\quad+ \sum_{j=n+2}^{2n+1}((\rho+1)c_{j-n-1} + (\rho - 1-\frac{1}{\rho^k})\alpha_{j-1})(h_{2n+1} - h_j - \angs{s_j}{x_{2n+1}-x_j})\,.
    \end{align*}
    The expression for $R$ can be obtained directly from the definition of $\ulambda^{(k)}$ and \eqref{eq:maincert}, which is valid for the iterates $x_0, x_1, \dots, x_n, x_*$ of proximal GD with the silver stepsize schedule $\pi^{(k)}$ and for the iterates $x_{n+1}, x_{n+2}, \dots, x_{2n+1}, x_*$ of the same algorithm with initial point $x_{n+1}$, by the recursive construction of the silver stepsize schedule $\pi^{(k+1)} = [\pi^{(k)}, \rho^{k-1}+1, \pi^{(k)}]$ (Lemma \ref{lem:silver}).
\end{proof}

\end{document}